\newtheorem{theorem}{Theorem}[section]
\newtheorem{proposition}[theorem]{Proposition}
\newtheorem{lemma}[theorem]{Lemma}
\newtheorem{corollary}[theorem]{Corollary}
\renewcommand{\indent}{\hspace*{5mm}}
\newcommand{\trns}{'}
\newcommand{\thetat}{\theta\trns}
\newcommand{\ckn}{u}
\newcommand{\cK}{{\cal K}}
\newcommand{\cP}{{\cal P}}
\newcommand{\cV}{{\cal V}}
\newcommand{\cS}{{\cal S}}
\newcommand{\cT}{{\cal T}}
\newcommand{\mle}{\hat\theta_n^*}
\newcommand{\grad}{\operatorname{grad}}
\begin{document}

\title{Bayesian logistic betting strategy against probability forecasting}

\author{
Masayuki Kumon\thanks{Japanese Association for Promoting Quality Assurance in Statistics},
Jing Li\thanks{Graduate School of Information Science and Technology,  University of Tokyo},
Akimichi Takemura\footnotemark[2] 
and Kei Takeuchi\thanks{Emeritus, Faculty of Economics, University of Tokyo}
}
\date{April 2012}
\maketitle

\begin{abstract}
We propose a betting strategy  
based on Bayesian logistic
regression modeling 
for the probability forecasting game 
in the framework of game-theoretic probability by Shafer and Vovk \cite{ShaVov01}.  
We prove some results concerning the strong law of large numbers in the probability forecasting
game with side information based on our strategy.
We also apply our strategy for assessing the quality of 
probability forecasting by the Japan Meteorological Agency.
We find that our strategy beats the agency by exploiting its tendency of avoiding clear-cut forecasts.
\end{abstract}

\noindent
{\it Keywords and phrases:} \ 
exponential family,
game-theoretic probability,
Japan Meteorological Agency,
probability of precipitation,
strong law of large numbers.

\section{Introduction}
\label{sec:intro}

In this paper we consider assessing quality of probability forecasting for binary
outcomes. A primary example of probability forecasting is the
probability of precipitation announced by weather forecasting
agencies. The binary outcomes are either ``rain'' (more precisely,  precipitation
above certain amount during a specified period at a particular location) or ``no rain''. In the
United States the National Weather Service started to announce
probability of precipitation in 1965 (cf.\ \cite{dawid-1986}), whereas
the Japan Meteorological Agency started probability forecasting in
1980 for Tokyo area and extended it to the whole Japan 
in 1986\footnote{{\tt http://www.jma.go.jp/jma/kishou/intro/gyomu/index2.html} (in Japanese)}.
How can we assess the quality of probability
forecasting?  We propose to assess probability forecasting by setting
up a hypothetical betting game against forecasting agencies in the
framework of game-theoretic probability by Shafer and Vovk
\cite{ShaVov01}.

We can regard the capital process of a betting
strategy as a test statistic of a statistical hypothesis (\cite{shafer-etal-test-martingales},
\cite{ktt-new-procedure}).
Our null hypothesis is that 
given the probability $p_n$ announced by the agency, the outcome is indistinguishable
from the Bernoulli trial with success probability $p_n$.  If this hypothesis is true, then
the capital process becomes a non-negative martingale and the capital process 
converges to a finite value %
almost surely.
However if the announced probability $p_n$ is not good, then a clever strategy
may be able to beat the 
forecasting agency in the betting game. In our game we construct a betting strategy based on
Bayesian logistic regression modeling, which is a very standard statistical model for analyzing
binary responses.  We will prove some results on the strong law of large numbers in probability forecasting game
with side information based on our betting strategy.
We also see that our strategy works well against probability of precipitation announced by 
the Japan Meteorological Agency.

Organization of this paper is as follows.
In Section \ref{sec:formulation} we formulate
the probability forecasting game with side information and derive some basic properties of
betting strategies.  It also serves as a brief introduction
to game-theoretic probability theory.
In Section 
\ref{sec:logistic-betting} we introduce our betting strategy based on
logistic regression model.
In Section \ref{sec:forcing} we prove some properties of our logistic betting
strategy in the framework of game-theoretic probability.
In Section \ref{sec:experiments} we give numerical studies of our strategy.
In particular we apply our strategy to the data on probability of precipitation
announced by the Japan Meteorological Agency. 
We end the paper with some discussions in Section \ref{sec:summary}.

\section{Formulation of the probability forecasting game and summary of preliminary results}
\label{sec:formulation}

In this section we formulate the probability forecasting game and extend it to include
side information.  We mostly follow the results in \cite{ktt-2008saa}. 

At the beginning of day $n$ (or at the end of day $n-1$) an agency (we call it ``Forecaster'')
announces a probability
$p_n$ of certain event in day $n$, such as precipitation in day $n$.
Let $x_n=0,1$ be the indicator variable for the event, i.e., $x_n=1$ if the event occurs and
$x_n=0$ otherwise.   We suppose that a player ``Reality'' decides the binary outcome $x_n$.
When Forecaster announces $p_n$, it also sells a ticket
with the price of $p_n$ per ticket.  The ticket pays one  monetary unit when the event occurs
in day $n$, i.e., the value of the ticket at the end of day $n$ is $x_n$.
A bettor or gambler, called ``Skeptic'', buys $M_n$ tickets with the price of $p_n$ per ticket.
Then the payoff to Skeptic in day $n$ is $M_n (x_n - p_n)$.
We allow $M_n$ to be negative, so that Skeptic can bet also on the non-occurrence of the
event.  If the probability announced by the agency is appropriate, it is hard for Skeptic
to make money in this game.  On the other hand, if the probability is biased in some way, Skeptic
may be able to increase his capital denoted by $\cK_n$.   Hence we can measure
the quality of probability forecasting in terms of $\cK_n$.  

We now give a protocol of the game, following the notational convention of
\cite{ShaVov01}.

\begin{quote}
{\sc Binary Probability Forecasting} (BPF)\\
\textbf{Protocol}:\\
\indent Skeptic announces his initial capital $\cK_0=1$.\\
\indent FOR $n=1,2,\ldots$:\\
\indent\indent Forecaster announces $p_n\in (0,1)$.\\
\indent\indent Skeptic announces $M_n\in\mathbb{R}$.\\
\indent\indent Reality announces $x_n\in \{0,1\}$.\\
\indent\indent $\cK_n:=\cK_{n-1}+M_n(x_n-p_n)$.\\
\textbf{Collateral Duty}:
Skeptic must keep $\cK_n$ non-negative.
\end{quote}

Forecaster is supposed to decide its forecast $p_n$ based on all relevant
side information available at the time of announcement.   
We modify the above protocol so that Forecaster also discloses the relevant side information $c_n$, 
which is a $d$-dimensional  column vector, 
together with the probability $p_n$. 
Furthermore we define auxiliary capital processes  $\cS_n$ and $\cV_n$.

\begin{quote}
{\sc Binary Probability Forecasting With Side Information} (BPFSI)\\
\textbf{Protocol}:\\
\indent $\cK_0:=1, \cS_0:=0, \cV_0:=0$.\\
\indent FOR $n=1,2,\ldots$:\\
\indent\indent Forecaster announces $p_n\in (0,1)$ and $c_n\in \mathbb{R}^d$.\\
\indent\indent Skeptic announces $M_n$.\\
\indent\indent Reality announces $x_n\in \{0,1\}$.\\
\indent\indent $\cK_n:=\cK_{n-1}+M_n(x_n-p_n)$.\\
\indent\indent $\cS_n:=\cS_{n-1}+c_n(x_n-p_n)$.\\
\indent\indent $\cV_n:=\cV_{n-1}+ c_n c_n\trns \; p_n(1-p_n)$. \\
\textbf{Collateral Duty}:
Skeptic must keep $\cK_n$ non-negative.
\end{quote}

In the protocol, $c_n\trns$ denotes the transpose of $c_n$, 
$\cK_n$ is a scalar, $\cS_n$ is a $d$-dimensional column vector and
$\cV_n$ is a $d\times d$ symmetric matrix.

If $d=1$ and $c_n\equiv 1$, then $\cS_n = \sum_{i=1}^n (x_i - p_i)$.  When we study
the usual strong law of large numbers in game-theoretic probability, we are interested in
the convergence
$\cS_n/n \rightarrow 0$ as $n\rightarrow\infty$.
Generalizing this case, in the presence of side information, we are
interested in the convergence 
$\cV_n^{-1} \cS_n %
\rightarrow 0$, although the order of $\cV_n$ may be different from $O(n)$.
We call this convergence the usual form
of the strong law of large numbers in BPFSI.  See Theorem \ref{thm:usual-SLLN} in 
Section \ref{subsec:usual-SLLN}.
However, 
as we prove in Theorem \ref{thm:main} of Section \ref{subsec:main-result},
under mild regularity conditions, we can prove a stronger result 
\[
\lim_n g(\cV_n)^{-1} \cS_n =0,
\]
where $g(\cV)$ is close to  $\cV^{1/2}$ such as
$g(\cV)=\cV^{1/2+\epsilon}$, $\epsilon>0$.  

Let 
\[
\nu_n=\frac{M_n}{\cK_{n-1}}
\]
denote the fraction of the capital Skeptic bets on day $n$. Then the capital process $\cK_n$ is written as
\begin{equation}
\label{eq:ckn}
\cK_n = \prod_{i=1}^n (1+\nu_i(x_i - p_i)).
\end{equation}

Now suppose that Skeptic himself models Reality's move
as a Bernoulli variable with  the success probability $\hat p_n \in (0,1)$.
If Skeptic totally trusts Forecaster, then he sets $\hat p_n=p_n$.
However if Skeptic does not totally trust Forecaster he may formulate
$\hat p_n$  differently from $p_n$.
Furthermore suppose that Skeptic uses the ``Kelly criterion'' (\cite{kelly-criterion-book}, \cite{kelly-1956})
to determine $\nu_n$ so as to maximize the expected value of the logarithm of the capital growth under $\hat p_n$:
\[
\nu_n \ : \ E_{\hat p_n}[\log(1+\nu(x_n-p_n))]
\ \rightarrow \ \max.
\]
Writing
\[
E_{\hat p_n}[\log(1+\nu(x_n-p_n))]
=\hat p_n \log (1+\nu(1-p_n)) + (1-\hat p_n) \log (1-\nu p_n)
\]
and differentiating this with respect to $\nu$, 
the unique maximizer $\nu_n$ is obtained as
\begin{equation}
\label{eq:nu_n}
\nu_n = \frac{\hat p_n - p_n}{p_n(1-p_n)} = \frac{\hat p_n}{p_n} - \frac{1-\hat p_n}{1-p_n}.
\end{equation}
With this choice of $\nu_n$ we have
\begin{align*}
1+\nu_n (x_n-p_n) &= \begin{cases} \hat p_n/p_n  & \text{if} \ x_n=1\\
                                (1-\hat p_n)/(1-p_n) & \text{if}\ x_n=0
                              \end{cases}\\
&=\frac{\hat p_n^{x_n} (1-\hat p_n)^{1-x_n}}{p_n^{x_n} (1-p_n)^{1-x_n}} .
\end{align*}                              
Hence \eqref{eq:ckn} is written as
\[
\cK_n = \frac{\prod_{i=1}^n \hat p_i^{x_i} (1-\hat p_i)^{1-x_i}}
        {\prod_{i=1}^n p_i^{x_i} (1-p_i)^{1-x_i}}.
\]
    
In the case that Skeptic models the joint probability $\hat p(x_1, \dots, x_n)$ of Reality's moves, 
$\hat p_n$ is given as the conditional probability
\[
\hat p_n = \frac{\hat p(x_1, \dots, x_{n-1},1)}{\hat p(x_1,\dots, x_{n-1})}.
\]
In this case
\[
\hat p_n^{x_n} (1-\hat p_n)^{1-x_n} = \frac{\hat p(x_1, \dots, x_{n-1},x_n)}{\hat p(x_1,\dots, x_{n-1})}, \qquad
x_n=0,1,
\]
and  $\cK_n$ is written as
\begin{equation}
\label{eq:joint-probability-formulation}
\cK_n = \frac{\hat p (x_1, \dots, x_n)}{\prod_{i=1}^n p_i^{x_i}(1-p_i)^{1-x_i}}.
\end{equation}

For the rest of this section we introduce some terminology of game-theoretic probability.
An infinite sequence  of Forecaster's moves and Reality's moves
\[
\xi=p_1 c_1 x_1 p_2 c_2 x_2 \dots
\]
is called a {\em path}.  The set $\Omega$ of all paths  is called the {\em sample space}.
A subset $E\subset \Omega$ is an {\em event}.  A {\em strategy} $\cP$ of Skeptic 
determines $\hat p_n$ based on a partial path $p_1 c_1 x_1\dots p_{n-1} c_{n-1} x_{n-1} p_n c_n$:
\[
\cP: p_1 c_1 x_1\dots p_{n-1} c_{n-1} x_{n-1} p_n c_n \mapsto \hat p_n, \qquad n=1,2,\dots.
\]
$\cK_n^{\cP}=\cK_n^{\cP}(\xi)$ denotes the capital process when Skeptic adopts the strategy $\cP$.
We say that Skeptic can {\em weakly force} an event $E$ by a strategy $\cP$ if 
$\cK_n^{\cP}$ is never negative and 
\[
\limsup_n \cK_n^{\cP}(\xi) = \infty   \quad \forall \xi\not\in E.
\]
For two events $E_1, E_2 \subset \Omega$, $E_1^C \cup E_2$ is denoted as $E_1 \Rightarrow E_2$, where
$E_1^C$ is the complement of $E_1$.
We say that by a strategy $\cP$ Skeptic can weakly force a conditional event $E_1\Rightarrow E_2$ if
$\cK_n^{\cP}$ is never negative and
\[
\limsup_n \cK_n^{\cP}(\xi) = \infty   \quad \forall \xi \in  E_1\cap E_2^C.
\]
$E_1$ is interpreted as a set of regularity conditions for the event $E_2$ to hold.

Let $\lambda_{\max,n}$ and $\lambda_{\min,n}$ denote the maximum and the minimum
eigenvalues of $\cV_n$. 
In this paper we consider the following regularity conditions: 
\begin{itemize}
\setlength{\itemsep}{0pt}
\item[i)] $\lim_n \lambda_{\min,n}=\infty$.
\item[ii)] $\limsup_n \lambda_{\max,n}/\lambda_{\min,n} < \infty$.
\item[iii)] $\{ c_1,c_2,\dots\}$ is a bounded set.
\end{itemize}
Namely we take $E_1$ as
\begin{equation}
\label{eq:our-regularity}
E_1 = \{\xi \mid \lim_n \lambda_{\min,n}=\infty, \limsup_n \lambda_{\max,n}/\lambda_{\min,n} < \infty \ \text{and}\ 
c_1,c_2,\dots \text{are bounded}\} .
\end{equation}
The condition i) makes the meaning of ``$\cV_n \rightarrow \infty$'' precise.
The condition ii) means that $\cV_n$ stays away from being singular.  For $d=1$ ii) is trivial
and not needed.

\section{Logistic betting strategy}
\label{sec:logistic-betting}

In this section we introduce a betting strategy based on logistic modeling of Reality's moves.

As in the previous section 
Skeptic models $x_n$ as a Bernoulli variable with the success probability $\hat p_n$. 
Furthermore we specify that Skeptic uses the following logistic regression model for the
logarithm of the odds ratio:
\begin{equation}
\label{eq:logistic-simple}
\log \frac{\hat p_n}{1-\hat p_n}= \log\frac{p_n}{1-p_n} + \thetat c_n,
\end{equation}
where $\theta \in{\mathbb R}^d$ is a parameter vector.

In previous studies in game-theoretic probability, many strategies of Skeptic depend only
on $x_i-p_i$, $i\le n-1$, and do not depend on $p_n$.  However obviously it is more reasonable
to consider Skeptic's strategies which depend on $p_n$. Strategies explicitly depending on $p_n$ are
also important from the viewpoint of defensive forecasting (\cite{defensive-forecasting},
\cite{df-linear}).  We again discuss this point in Section \ref{subsec:monotonicity}.

We now consider the capital process $\cK_n^\theta$ of \eqref{eq:logistic-simple} 
for a fixed $\theta\in \mathbb{R}^d$.
Solving for $\hat p_n$ we have
\begin{equation}
\label{eq:hatpn}
\hat p_n = \frac{p_n e^{\thetat c_n}}{1+ p_n (e^{\thetat c_n}-1)}, \qquad
1-\hat p_n = \frac{1-p_n}{1+ p_n (e^{\thetat c_n}-1)}.
\end{equation}
Then 
\[
\hat p_n^{x_n} (1-\hat p_n)^{1-x_n}= p_n^{x_n} (1- p_n)^{1-x_n} \frac{e^{\thetat c_n x_n}}{1+ p_n (e^{\thetat c_n}-1)}
\]
and the capital process is written as
\begin{equation}
\label{eq:capital-theta}
\cK_{n}^\theta = \prod_{i=1}^n \frac{\hat p_i^{x_i} (1-\hat p_i)^{1-x_i}}{p_i^{x_i} (1-p_i)^{1-x_i}}=
\frac{e^{\thetat \sum_{i=1}^n c_i x_i}}{\prod_{i=1}^n (1+p_i (e^{\thetat c_i}-1))}.
\end{equation}

Naturally it is better for Skeptic to choose the value of $\theta$ depending on the moves of other players. 
In this paper we consider a Bayesian strategy,
which specifies a prior distribution $\pi(\theta)$ for
$\theta$.  Bayesian strategies for Binary Probability Forecasting with constant $p_n\equiv p$ 
was considered in 
\cite{ktt-2008saa}. Bayesian strategy is basically the same as the universal portfolio by
Cover and his coworkers (\cite{cover-universal-portfolios}, \cite{cover-ordentlich}, \cite{cover-thomas-2nd}).
In the universal portfolio, a prior is put on the betting ratio $\nu$  itself, where as 
we put a prior on the parameter of Skeptic's model.  Furthermore differently from \cite{cover-ordentlich}
we allow continuous side information.  

In the Bayesian logistic strategy with the prior density function $\pi(\theta)$ of $\theta$, the
capital process $\cK_{n}^\pi$ is written as
\[
\cK_{n}^\pi = %
\int_{{\mathbb R}^d} \cK_n^\theta \pi(\theta)d\theta=
\int_{{\mathbb R}^d} %
\frac{e^{\thetat \sum_{i=1}^n c_i x_i}}{\prod_{i=1}^n (1+p_i (e^{\thetat c_i}-1))}
\pi(\theta)d\theta.
\]
$\cK_{n}^\pi$ is of the form \eqref{eq:joint-probability-formulation} where
\[
\hat p(x_1,\dots,x_n)= \prod_{i=1}^n p_i^{x_i} (1-p_i)^{1-x_i} \int_{{\mathbb R}^d}
\frac{e^{\thetat \sum_{i=1}^n c_i x_i}}{\prod_{i=1}^n (1+ p_i (e^{\thetat c_i}-1))}\pi(\theta)d\theta.
\]

In this paper we consider a prior density which is positive in a 
neighborhood of the origin.  We call such $\pi$ ``a prior supporting a neighborhood of
the origin''.

\section{Properties of logistic betting strategy from the viewpoint of game-theoretic probability}
\label{sec:forcing}

In this section we prove game-theoretic properties of our Bayesian logistic strategy.

\subsection{Weak forcing of the usual form of the strong law of large numbers}
\label{subsec:usual-SLLN}

The first theoretical result on our logistic betting strategy is the following theorem.
\begin{theorem}
\label{thm:usual-SLLN}
In BPFSI, by a Bayesian logistic strategy with a prior supporting a neighborhood of the origin, Skeptic can weakly force
\[
E_1 \ \Rightarrow \ \lim_n \cV_n^{-1}\cS_n =0,
\]
where $E_1$ is given in \eqref{eq:our-regularity}.
\end{theorem}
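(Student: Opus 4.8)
The plan is to exploit the fact that the Bayesian capital process $\cK_n^\pi = \int_{\mathbb R^d} \cK_n^\theta\, \pi(\theta)\,d\theta$ dominates, up to the prior mass of any small ball, the supremum of $\cK_n^\theta$ over $\theta$ near the origin, and that $\cK_n^\theta$ is an exponential-family-type quantity whose logarithm is controlled by $\cS_n$ and $\cV_n$. First I would write $\log \cK_n^\theta = \thetat \sum_{i=1}^n c_i x_i - \sum_{i=1}^n \log(1 + p_i(e^{\thetat c_i}-1))$ and perform a second-order Taylor expansion of each summand $\log(1+p_i(e^{\thetat c_i}-1))$ in $\theta$ around $0$. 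The first-order term is $\thetat c_i p_i$, so the linear part of $\log \cK_n^\theta$ is exactly $\thetat \sum_{i=1}^n c_i(x_i - p_i) = \thetat \cS_n$. The second-order (Hessian) term at $\theta=0$ is $\tfrac12 \thetat (c_i c_i\trns\, p_i(1-p_i))\theta$, whose sum over $i$ is $\tfrac12 \thetat \cV_n \theta$. Under the boundedness hypothesis iii) on $\{c_i\}$, the cubic and higher remainder terms are bounded by $C\|\theta\|^3 n$, but more usefully — since each $p_i(e^{\thetat c_i}-1)$ and its derivatives are bounded when $\|\theta\|$ and $\|c_i\|$ are bounded — by $C'\|\theta\|^3 \sum_i \|c_i\|^2 \le C''\|\theta\|\,\lambda_{\max,n}$ for $\|\theta\|$ small, using that $\lambda_{\max,n} \asymp \sum_i \|c_i\|^2$. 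Hence for $\theta$ in a fixed small ball $B_\delta$,
\[
\log \cK_n^\theta \ \ge\ \thetat \cS_n - \tfrac12\thetat \cV_n \theta - C''\|\theta\|\,\lambda_{\max,n}.
\]

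Next I would lower-bound $\log \cK_n^\pi$. Since $\pi$ is positive on a neighborhood of the origin, there is a constant $c_0>0$ with $\cK_n^\pi \ge c_0 \int_{B_\delta} \cK_n^\theta\,d\theta$, and then by restricting the integral to a small ball around any chosen point, or simply by $\int_{B_\delta} e^{f(\theta)}d\theta \ge \operatorname{vol}(B_{\delta'})\, e^{\inf_{B_{\delta'}(\theta_0)} f}$, one obtains a bound of the form $\log \cK_n^\pi \ge \sup_{\theta \in B_{\delta''}} \{\thetat \cS_n - \tfrac12 \thetat \cV_n\theta\} - C''\delta''\lambda_{\max,n} - (\text{const})$. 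Now suppose $\xi \in E_1$ but $\cV_n^{-1}\cS_n \not\to 0$. Then along a subsequence $\|\cV_n^{-1}\cS_n\| \ge \eta > 0$. Choosing the test direction $\theta = t\,\cV_n^{-1}\cS_n / \|\cV_n^{-1}\cS_n\|$ for a small fixed $t$ (small enough to stay in $B_{\delta''}$), the quadratic $\thetat\cS_n - \tfrac12\thetat\cV_n\theta$ is at least (roughly) $t\,\eta\,\lambda_{\min,n} - \tfrac12 t^2 \lambda_{\max,n}$; using condition ii) that $\lambda_{\max,n} \le K\lambda_{\min,n}$, this is at least $(t\eta - \tfrac12 Kt^2)\lambda_{\min,n}$, and the error term $C''\delta''\lambda_{\max,n} \le C'' K\delta''\lambda_{\min,n}$ can be absorbed by taking $\delta''$ (and correspondingly $t$) small. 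So on this subsequence $\log \cK_n^\pi \ge c_1 \lambda_{\min,n} - c_2 \to \infty$ by condition i), whence $\limsup_n \cK_n^\pi = \infty$. Since $\cK_n^\pi$ is a nonnegative capital process (it is a mixture of the nonnegative processes $\cK_n^\theta$ with prior weights and starts at $1$), this establishes weak forcing of $E_1 \Rightarrow \lim_n \cV_n^{-1}\cS_n = 0$.

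The part I expect to require the most care is the control of the Taylor remainder and its comparison with $\lambda_{\max,n}$ and $\lambda_{\min,n}$: I need the cubic-and-higher error, summed over $i$, to be genuinely lower-order than the quadratic gain $t\eta\lambda_{\min,n}$ uniformly along the bad subsequence, and this is where boundedness of $\{c_i\}$ (iii) and the eigenvalue-ratio bound (ii) both get used essentially. A clean way to package this is to note that for $\|\theta\|\le \delta$ and $\|c_i\|\le R$ there is a constant $\kappa = \kappa(\delta,R)$ with $\bigl|\log(1+p_i(e^{\thetat c_i}-1)) - \thetat c_i p_i - \tfrac12 p_i(1-p_i)(\thetat c_i)^2\bigr| \le \kappa \|\theta\|^3 \|c_i\|^2 \le (\kappa/\lambda^{}_{\min})\cdot\|\theta\| \cdot \thetat(c_ic_i\trns)\theta$-type bounds; summing gives an error dominated by $\kappa\|\theta\|\,\thetat\cV_n\theta \le \kappa\|\theta\|\lambda_{\max,n}$, which is a $\delta$-small fraction of the leading term once $\delta$ is fixed small. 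Everything else — nonnegativity of $\cK_n^\pi$, the mixture representation, the maximizer computation — is already available from the excerpt.
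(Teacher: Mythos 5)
Your overall route is essentially the paper's: expand $\log\cK_n^\theta$ around the origin to get a linear term $\thetat\cS_n$ and a quadratic term $-\tfrac12\thetat\cV_n\theta$ (up to a controlled correction), argue by contradiction along a subsequence with $\Vert\cV_n^{-1}\cS_n\Vert\ge\eta>0$, test in the direction $\cV_n^{-1}\cS_n/\Vert\cV_n^{-1}\cS_n\Vert$, use conditions i)--iii) to force $\log\cK_n^\theta\ge c\,\lambda_{\min,n}\to\infty$ there, and transfer this to $\cK_n^\pi$ via the prior mass near the origin. However, there are two concrete soft spots. First, your main remainder bound rests on the claim $\lambda_{\max,n}\asymp\sum_i\Vert c_i\Vert^2$, which is false in general: $\operatorname{tr}\cV_n=\sum_i\Vert c_i\Vert^2 p_i(1-p_i)$, and the factors $p_i(1-p_i)$ can tend to $0$ (e.g.\ $d=1$, $c_i\equiv1$, $p_i(1-p_i)=1/i$ gives $\cV_n\sim\log n$ while $\sum_i c_i^2=n$), so a remainder of size $\Vert\theta\Vert^3\sum_i\Vert c_i\Vert^2$ need not be dominated by $\Vert\theta\Vert\lambda_{\max,n}$. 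The cure is the one you sketch at the end but must carry the Bernoulli variance through: the second derivative of $y\mapsto\log(1+p(e^y-1))$ is $p(1-p)e^y/(1+p(e^y-1))^2$, which lies between $e^{-|y|}p(1-p)$ and $e^{|y|}p(1-p)$, so for $\Vert\theta\Vert\le\delta$ and $\Vert c_i\Vert\le L_c$ the summed error is at most a multiplicative factor $e^{\delta L_c}$ (equivalently an additive $\kappa\Vert\theta\Vert\,\thetat\cV_n\theta$) on the quadratic term; this is exactly how the paper proceeds, via the bound on $\gamma_p'$.

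Second, and more substantively, your passage from the pointwise lower bound at the $n$-dependent point $\theta_{0,n}=t\,\cV_n^{-1}\cS_n/\Vert\cV_n^{-1}\cS_n\Vert$ to $\cK_n^\pi\to\infty$ is not justified as written. You need $\int_{B}\cK_n^\theta\pi(\theta)\,d\theta$ to diverge, which requires (a) a lower bound on $\log\cK_n^\theta$ of order $\lambda_{\min,n}$ holding uniformly over a ball around $\theta_{0,n}$ of radius independent of $n$ (moving away from $\theta_{0,n}$ by $h$ changes the linear term by $h\trns\cS_n=\Vert\cV_n^{-1}\cS_n\Vert\,h\trns\cV_n\eta_n$, which is only controlled relative to the main term by invoking condition ii) and taking the radius small compared with $t/L_\lambda$ --- you never check this), and (b) a positive lower bound, uniform in $n$, on the prior mass of these moving balls; positivity of the density on a neighborhood of the origin does not by itself give a pointwise or infimum lower bound, so ``prior mass of any small ball'' is not a single fixed constant. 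The paper resolves both points at once by extracting an accumulation point $\eta$ of the normalized vectors $\cV_n^{-1}\cS_n/\Vert\cV_n^{-1}\cS_n\Vert$ along the bad subsequence and proving the divergence uniformly for all unit $\tilde\eta$ near $\eta$, so that a single fixed neighborhood $N(\epsilon\eta)$ of positive prior mass suffices; alternatively your argument can be repaired by covering the sphere of radius $t$ by finitely many balls of half the radius, each of positive prior mass. Without one of these devices the final step of your proof does not go through.
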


The rest  of this subsection is devoted to a proof of this theorem.
The basic logic of our proof is the same as in 
Section 3.2 of \cite{ShaVov01}.

We first consider  the logarithm of $\cK_n^\theta$ in \eqref{eq:capital-theta} for a fixed $\theta$: 
\[
\log \cK_n^\theta = \thetat \sum_{i=1}^n c_i x_i  - \sum_{i=1}^n \log(1+p_i (e^{\thetat c_i}-1)).
\]
For notational simplicity we write 
\[
\ckn(\theta)=\log \cK_n^\theta.
\]
We investigate the behavior of $\ckn(\theta)$ for 
$\theta$ close the origin.
Fix $\theta \in {\mathbb R}^d$ with unit length (i.e.\ $\Vert \theta\Vert=1$) and consider
$\ckn(s\theta)$, $0\le s\le \epsilon$.  Note that $\ckn(0)=0$.
We will choose $\epsilon$ appropriately later
in \eqref{eq:epsilon-selec}.

The derivative of $\ckn(s\theta)$ with respect to $s$
is written as follows.
\begin{align}
\frac{\partial}{\partial s} \ckn(s\theta) &=  \thetat \sum_{i=1}^n c_i x_i  
- \sum_{i=1}^n \frac{\thetat c_i p_i e^{s\thetat c_i}}{1+p_i (e^{s\thetat c_i}-1)}
\nonumber \\
&=\thetat \sum_{i=1}^n c_i (x_i - p_i) - \sum_{i=1}^n \frac{\thetat c_i p_i e^{s\thetat c_i} - \thetat c_i p_i (1+p_i (e^{s\thetat c_i}-1))
}{1+p_i (e^{s\thetat c_i}-1)}\nonumber \\
&=\thetat \cS_n  - \sum_{i=1}^n \thetat c_i p_i (1-p_i) 
\frac{e^{s\thetat c_i}-1} {1+p_i (e^{s\thetat c_i}-1)} .
\label{eq:fixed-theta-capital}
\end{align}
Note that $\thetat c_i$ and $e^{s\thetat c_i}-1$ have the same sign and hence
each summand in the second term on the right-hand side of \eqref{eq:fixed-theta-capital} is non-negative.

Let 
\[
\gamma_p(y)= \frac{e^y-1}{1+p(e^y-1)}  
\]
be a function of $y\in {\mathbb R}$ depending on the parameter $p\in [0,1]$.  Note $\gamma_p(0)=0$.  
Its derivative is computed as 
\begin{equation}
\label{eq:gamma-deriv}
\gamma_p'(y)= \frac{e^y}{(1+p(e^y-1))^2} > 0.
\end{equation}
Hence 
\[
\gamma_p(y)=\int_0^y \gamma_p'(z)dz = \int_0^y  \frac{e^z}{(1+p(e^z-1))^2} dz,
\]
where for negative $y<0$  we interpret $\int_0^y (\cdots) dz $ as $-\int_y^0 (\cdots) dz$.
Now $\gamma_p'(z)$ in \eqref{eq:gamma-deriv}
is monotone in $p$ with $\gamma'_0(z)=e^z$ and 
$\gamma_1'(z)=e^{-z}$.
Hence 
\[
e^{-|z|} = \min(e^{-z},e^z) \le \gamma_p'(z) \le \max(e^{-z},e^{z})=e^{|z|}.
\]
Then for $z$ between $0$ and $y$ we have 
\begin{equation}
\label{eq:second-deriv-2}
e^{-|y|} \le \gamma_p'(z) \le e^{|y|}.
\end{equation}
Using the upper bound $e^{|y|}$ and integrating $\gamma_p'(z)$ we obtain
\[
|\gamma_p(y)|=\frac{|e^y-1|}{1+p(e^y-1)} \le |y| e^{|y|} \quad\text{and}\quad 0 \le y\gamma_p(y) = y \frac{e^y-1}{1+p(e^y-1)} \le y^2 e^{|y|}.
\]

Let  $L_{c,n}=\max_{1\le i \le n} \Vert c_i\Vert$.
Then 
\[
\frac{\partial}{\partial s} \ckn(s\theta) \ge
\thetat \cS_n  - s \sum_{i=1}^n (\thetat c_i)^2 p_i (1-p_i) 
e^{\epsilon L_{c,n}} = \thetat \cS_n  - s \thetat \cV_n \theta e^{\epsilon L_{c,n}}
\]
and integrating this for $0\le s\le \epsilon$ we have (for any $\theta$ and $\epsilon>0$)
\[
\ckn(\epsilon \theta)\ge \epsilon \thetat \cS_n - \frac{\epsilon^2}{2} \thetat \cV_n \theta e^{\epsilon L_{c,n}}.
\]

For the rest of our proof we arbitrary choose and fix a path $\xi\in E_1$, where $E_1$ is given in 
\eqref{eq:our-regularity}.  Various constants ($\epsilon$'s,  $L$'s etc.) below may depend on $\xi$.
By iii) there exists $L_c$ such that
$L_{c,n} < L_c$ for all $n$. Also there exist $n_0$ and $L_\lambda$ such that
$\lambda_{\max,n}/\lambda_{\min,n}< L_\lambda$ for all $n\ge n_0$.
Now suppose that  $\cV_n^{-1} \cS_n \not \rightarrow 0$ for this $\xi$. 
Then for some  $\epsilon_1 > 0$  and for infinitely many $n$  
we have $\Vert \cV_n^{-1} \cS_n\Vert \ge \epsilon_1$.  Let 
${\mathbb N}_1 = \{ n_1, n_2, \dots\}$ be a subsequence such that 
$\Vert \cV_n^{-1} \cS_n\Vert \ge \epsilon_1$ for $n\in {\mathbb N}_1$.
The normalized vectors
\[
\eta_n = \frac{\cV_n^{-1} \cS_n}{\Vert \cV_n^{-1} \cS_n \Vert}, \ \ n\in {\mathbb N}_1,
\]
have an accumulation point $\eta$, $\Vert \eta\Vert=1$, 
and hence along a further subsequence ${\mathbb N}_2\subset {\mathbb N}_1$ we have
\[
\lim_{n\rightarrow\infty,\  n \in {\mathbb N}_2} \eta_n = \eta.
\]

By Cauchy-Schwarz, for three vectors $a,b,c\in {\mathbb R}^d$,  we have
\[
\frac{|b\trns \cV_n c|}{a\trns \cV_n a} 
\le \frac{\lambda_{\max,n} \Vert b\Vert \Vert c \Vert}{\lambda_{\min,n} \Vert a\Vert^2}
< L_\lambda \frac{ \Vert b\Vert \Vert c \Vert}{ \Vert a\Vert^2}, \quad \forall n\ge n_0.
\]
Then we can choose $0< \epsilon_2< 1/4$ such that 
for all sufficiently large $n\in {\mathbb N}_2$ and
for all $\tilde \eta$, $\Vert\tilde\eta\Vert=1$, sufficiently close to $\eta$, we have
\begin{align*}
\ckn(\epsilon \tilde \eta) 
&\ge \epsilon \tilde\eta\trns \cS_n - \frac{\epsilon^2}{2} \tilde \eta\trns
\cV_n \tilde \eta e^{\epsilon L_{c,n}} \\
&= \epsilon \Vert \cV_n^{-1} \cS_n \Vert
\tilde\eta\trns \cV_n \eta_n - \frac{\epsilon^2}{2} \tilde \eta\trns
\cV_n \tilde \eta e^{\epsilon L_{c,n}} 
\\
&\ge  \epsilon \epsilon_1 \eta\trns \cV_n \eta(1-\epsilon_2) -
\frac{\epsilon^2}{2} \eta\trns \cV_n \eta(1+\epsilon_2) e^{\epsilon L_c} \\
&=\epsilon \eta\trns \cV_n \eta \; \big(\epsilon_1(1-\epsilon_2) - \frac{\epsilon}{2}(1+\epsilon_2)e^{\epsilon L_c}\big).
\end{align*}

We now choose small enough  $\epsilon>0$  such that
\begin{equation}
\label{eq:epsilon-selec}
\epsilon_1 (1-\epsilon_2) - \frac{\epsilon}{2}(1+\epsilon_2)e^{\epsilon L_c}> \frac{\epsilon_1}{2}.
\end{equation}
Then
\[
\ckn(\epsilon \tilde \eta) \ge \frac{\epsilon \epsilon_1}{2} \lambda_{\min,n}   \rightarrow \infty  \qquad (n\rightarrow\infty, n\in {\mathbb N}_2).
\]
Note that the convergence is uniform for $\tilde \eta$ in some neighborhood $N(\eta)$ of $\eta$.
Since our prior $\pi$ puts a positive weight to $N(\epsilon\eta)$, 
$\cK_n^\pi \rightarrow\infty$ along $n\in {\mathbb N}_2$.  
This completes our proof of Theorem \ref{thm:usual-SLLN}.

\subsection{Weak forcing of a more precise form of the strong law of large numbers}
\label{subsec:main-result}
As discussed in Section 
\ref{sec:formulation}, we can establish a much more precise rate of convergence
of the strong law of large numbers based on our Bayesian logistic strategy.
Our main theorem of this paper is stated as follows.
\begin{theorem}
\label{thm:main}
In BPFSI, by a Bayesian logistic strategy with a prior distribution 
supporting a neighborhood of the origin, Skeptic can weakly force
\[
E_1 \ \Rightarrow \ \limsup_n \frac{\cS_n\trns \cV_n^{-1}\cS_n}{\log \det \cV_n} \le 1,
\]
where $E_1$ is given in \eqref{eq:our-regularity}.
\end{theorem}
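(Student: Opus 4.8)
The plan is to reuse the very same Bayesian logistic strategy employed in Theorem~\ref{thm:usual-SLLN}, whose capital is $\cK_n^\pi=\int_{\mathbb{R}^d}\cK_n^\theta\,\pi(\theta)\,d\theta$, and to bound $\cK_n^\pi$ from below by a Laplace-type estimate concentrated near $\theta_n^*:=\cV_n^{-1}\cS_n$. Write $E_2=\{\xi:\limsup_n\cS_n\trns\cV_n^{-1}\cS_n/\log\det\cV_n\le 1\}$. Fix a path $\xi\in E_1$ and suppose $\cK_n^\pi$ stays bounded; it suffices to deduce $\xi\in E_2$, since then $\cK_n^\pi$ is unbounded on $E_1\cap E_2^C$, which is exactly the asserted weak forcing. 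The first step is a bootstrap: the computation in Section~\ref{subsec:usual-SLLN} shows that this same $\cK_n^\pi$ satisfies $\cK_n^\pi\to\infty$ along a subsequence whenever $\cV_n^{-1}\cS_n\not\to 0$, so boundedness of $\cK_n^\pi$ forces $\theta_n^*\to 0$. In particular $\theta_n^*$ eventually lies in a ball $B(0,r_0)$ on which $\pi\ge c_0>0$, and the radii introduced below may be taken to vanish — this is precisely what keeps the constant in $E_2$ equal to $1$ rather than to something larger.

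For the lower bound I use the inequality already derived in Section~\ref{subsec:usual-SLLN}: applying it to the unit vector $\phi/\|\phi\|$ with $\epsilon=\|\phi\|$ gives, for every $\phi$ with $\|\phi\|\le R$ and with $L_c$ bounding $\|c_i\|$ (condition iii)),
\[
\log\cK_n^\phi\ \ge\ \phi\trns\cS_n-\tfrac12 e^{\|\phi\|L_{c,n}}\phi\trns\cV_n\phi\ \ge\ \phi\trns\cS_n-\tfrac12 e^{RL_c}\phi\trns\cV_n\phi .
\]
Now choose $\rho_n\to 0$ with $\rho_n^2\lambda_{\min,n}\to\infty$ (possible by condition i)), set $R_n=\|\theta_n^*\|+\rho_n\to 0$, $\delta_n=e^{R_nL_c}-1\to 0$, and $\theta_n^{**}=\theta_n^*/(1+\delta_n)$. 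Completing the square in the displayed bound (with $1+\delta_n=e^{R_nL_c}$) yields, for $\|\phi\|\le R_n$,
\[
\log\cK_n^\phi\ \ge\ \frac{\cS_n\trns\cV_n^{-1}\cS_n}{2(1+\delta_n)}-\frac{1+\delta_n}{2}(\phi-\theta_n^{**})\trns\cV_n(\phi-\theta_n^{**}).
\]
Since $\|\theta_n^{**}\|\le\|\theta_n^*\|$, for large $n$ we have $B(\theta_n^{**},\rho_n)\subset B(0,R_n)\cap B(0,r_0)$; restricting the integral defining $\cK_n^\pi$ to this ball, using $\pi\ge c_0$ there and the last bound, and translating by $\theta_n^{**}$,
\[
\cK_n^\pi\ \ge\ c_0\,e^{\cS_n\trns\cV_n^{-1}\cS_n/(2(1+\delta_n))}\int_{B(0,\rho_n)}e^{-\frac{1+\delta_n}{2}\phi\trns\cV_n\phi}\,d\phi .
\]

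It remains to estimate the Gaussian integral. Substituting $\zeta=((1+\delta_n)\cV_n)^{1/2}\phi$ and noting that the image of $B(0,\rho_n)$ contains $\{\|\zeta\|\le\rho_n\lambda_{\min,n}^{1/2}\}$,
\[
\int_{B(0,\rho_n)}e^{-\frac{1+\delta_n}{2}\phi\trns\cV_n\phi}\,d\phi\ \ge\ \big((1+\delta_n)^d\det\cV_n\big)^{-1/2}\int_{\|\zeta\|\le\rho_n\lambda_{\min,n}^{1/2}}e^{-\|\zeta\|^2/2}\,d\zeta ,
\]
and the last integral tends to $(2\pi)^{d/2}$ because $\rho_n^2\lambda_{\min,n}\to\infty$; hence it exceeds $\tfrac12(2\pi)^{d/2}$ for $n$ large. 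Taking logarithms in the previous two displays,
\[
\log\cK_n^\pi\ \ge\ \frac{\cS_n\trns\cV_n^{-1}\cS_n}{2(1+\delta_n)}-\tfrac12\log\det\cV_n-\tfrac d2\log(1+\delta_n)+\log\big(\tfrac12 c_0(2\pi)^{d/2}\big).
\]
Since $\det\cV_n\ge\lambda_{\min,n}^d\to\infty$, $\log\det\cV_n\to\infty$; as $\cK_n^\pi$ is bounded and $\delta_n\to 0$, dividing by $\log\det\cV_n$ and letting $n\to\infty$ gives $\limsup_n\cS_n\trns\cV_n^{-1}\cS_n/\log\det\cV_n\le 1$, i.e.\ $\xi\in E_2$, as required.

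The part I expect to be delicate is exactly this bootstrap together with the honesty of the constant $1$: a single global quadratic bound for $\log\cK_n^\phi$ on a ball of fixed radius $R$ would only give $\limsup\le e^{RL_c}>1$, and one genuinely needs that boundedness of $\cK_n^\pi$ first drags $\theta_n^*$ (hence $R_n$ and $\delta_n$) to $0$, and then that the precision-$(1+\delta_n)\cV_n$ Gaussian loses only a vanishing fraction of its mass outside the shrinking ball $B(0,\rho_n)$ — which is where condition i) is used, while condition ii) enters only through the appeal to Theorem~\ref{thm:usual-SLLN}.
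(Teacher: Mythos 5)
Your proof is correct, but it takes a genuinely different route from the paper's. The paper decomposes the problem through the hindsight (maximum likelihood) strategy: it bounds $\Vert\mle\Vert$ by $\Vert\cV_n^{-1}\cS_n\Vert$ (Lemma \ref{lem:small-mle}), uses exponential-family/Legendre-transform identities to show $\log\cK_n^{\mle}\approx \cS_n\trns\cV_n^{-1}\cS_n/2$ (Proposition \ref{prop:small-mle}, Corollary \ref{cor:hindsight}), and then invokes a Laplace approximation to show $\log\cK_n^{\mle}-\log\cK_n^{\pi}=o(\log\det\cV_n)$ (Lemma \ref{lem:laplace}). You bypass the MLE entirely: starting from the quadratic lower bound on $\log\cK_n^{\phi}$ already derived in the proof of Theorem \ref{thm:usual-SLLN}, you complete the square around $\cV_n^{-1}\cS_n/(1+\delta_n)$ and integrate the prior over a shrinking ball, getting the one-sided bound $\log\cK_n^{\pi}\ge \cS_n\trns\cV_n^{-1}\cS_n/(2(1+\delta_n))-\tfrac12\log\det\cV_n+O(1)$, which is all that weak forcing needs; your bootstrap (boundedness of $\cK_n^{\pi}$ on an $E_1$-path forces $\cV_n^{-1}\cS_n\to 0$, hence $\delta_n\to 0$ and the sharp constant $1$) plays the same role as Proposition \ref{prop:mle0} and the hypothesis of Lemma \ref{lem:laplace} in the paper, and you are right that condition ii) enters only there. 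What your route buys is self-containedness precisely where the paper appeals to ``the standard Laplace method'': the Gaussian-mass estimate over $B(0,\rho_n)$ with $\rho_n^{2}\lambda_{\min,n}\to\infty$ is explicit, and no control of the hindsight capital is required. What the paper's route buys is two-sided information---the asymptotics of $\log\cK_n^{\mle}$ and its closeness to $\log\cK_n^{\pi}$---which is reused in Section \ref{sec:experiments} (the plotted approximation $\cS_n\trns\cV_n^{-1}\cS_n/2$) but is not needed for the theorem itself. One small caveat, common to both arguments: ``a prior supporting a neighborhood of the origin'' must be read as giving a uniform lower bound $\pi\ge c_0>0$ on some ball around the origin (e.g.\ $\pi$ continuous and positive there); your step $\pi\ge c_0$ on $B(0,r_0)$ uses exactly this, as does the paper's Laplace step.
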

We give a proof of this theorem in the following three subsections.

\subsubsection{Bounding the maximum likelihood estimate}
We now consider the behavior of $\cK_n^\theta$ in
\eqref{eq:capital-theta}, when $\cK_n^\theta$ is maximized
with respect to $\theta$.  Let
\[
\mle  =\operatorname{argmax} \cK_n^\theta.
\]
We call $\mle$ the maximum likelihood estimate, since $\cK_n^\theta$ is of the form of the likelihood function of the logistic regression model.
It is easily seen that the maximizer $\mle$ is finite except for a
special case that the vectors in $\{ c_i \mid x_i=1\}\cup \{ -c_i \mid x_i=0\}$
lie on a half-space defined by a hyperplane containing the origin.  More specifically in Lemma
\ref{lem:small-mle}  we prove that $\Vert \mle\Vert $ is small when $\Vert \cV_n^{-1}\cS_n\Vert$
is small.

The maximizing $\mle$ can only be computed at the end of day $n$
after seeing all the data $p_1, c_1, x_1, \dots, p_n, c_n, x_n$.  
Hence we call a strategy using $\mle$ a ``hindsight
strategy'', which is the same as the best constant rebalanced portfolio (BCRP) in the terminology
of the universal portfolio.

We prove the following lemma.
\begin{lemma}
\label{lem:small-mle}
Let $L_{c,n} = \max_{1\le i \le n} \Vert c_i \Vert$
and $L_{\lambda,n}=\lambda_{\max,n}/\lambda_{\min,n}$,
where we assume $\lambda_{\min,n}>0$. Then
\[
\Vert \cV_n^{-1} \cS_n \Vert \le \frac{1}{3L_{c,n} L_{\lambda,n}} \ \ \Rightarrow \ \ \Vert\mle \Vert
\le 3 L_{\lambda,n} \Vert \cV_n^{-1} \cS_n \Vert.
\]
\end{lemma}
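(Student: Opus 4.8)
The plan is to analyze the gradient of $u(\theta)=\log\cK_n^\theta$ and exploit its concavity. From \eqref{eq:capital-theta} we have
\[
u(\theta)=\thetat\sum_{i=1}^n c_i x_i - \sum_{i=1}^n \log\bigl(1+p_i(e^{\thetat c_i}-1)\bigr),
\]
and the computation leading to \eqref{eq:fixed-theta-capital} (applied with a general direction rather than a unit vector) gives
\[
\grad u(\theta) = \cS_n - \sum_{i=1}^n c_i\, p_i(1-p_i)\,\gamma_{p_i}(\thetat c_i),
\]
where $\gamma_p$ is the function introduced after \eqref{eq:fixed-theta-capital}. Since $\mle$ is the (interior) maximizer, $\grad u(\mle)=0$, so
\[
\cS_n = \sum_{i=1}^n c_i\, p_i(1-p_i)\,\gamma_{p_i}(\mle\trns c_i).
\]
The idea is to compare this with $\cV_n \mle = \sum_i c_i c_i\trns p_i(1-p_i)\mle$, i.e. to replace $\gamma_{p_i}(\mle\trns c_i)$ by its linearization $\mle\trns c_i$ and control the error.

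The key step is a quadratic estimate on $\gamma_p$ near the origin: from \eqref{eq:second-deriv-2} one has $\gamma_p'(0)=1$ and $|\gamma_p'(z)-1|$ small for small $|z|$; concretely, integrating $\gamma_p'(z)=1+(\gamma_p'(z)-1)$ and using $|\gamma_p'(z)-1|\le e^{|y|}-1$ for $|z|\le|y|$ gives $|\gamma_p(y)-y|\le |y|(e^{|y|}-1)$, and for $|y|\le 1$ this is at most $(e-1)|y|^2 \le \tfrac{3}{2}|y|^2$ (the constant $3$ in the statement will come out of bookkeeping like this). Writing $R = \cS_n - \cV_n\mle = \sum_i c_i\,p_i(1-p_i)\bigl(\gamma_{p_i}(\mle\trns c_i)-\mle\trns c_i\bigr)$, each term is bounded in norm by $\|c_i\|\,p_i(1-p_i)\cdot \tfrac32 (\mle\trns c_i)^2$, so
\[
\|R\| \le \tfrac32 L_{c,n} \sum_{i=1}^n p_i(1-p_i)(\mle\trns c_i)^2 = \tfrac32 L_{c,n}\, \mle\trns \cV_n \mle \le \tfrac32 L_{c,n}\,\lambda_{\max,n}\|\mle\|^2,
\]
valid as long as $|\mle\trns c_i|\le 1$ for all $i$, i.e. as long as $L_{c,n}\|\mle\|\le 1$. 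From $\cV_n\mle = \cS_n - R$ we get $\lambda_{\min,n}\|\mle\| \le \|\cS_n\| + \|R\| \le \lambda_{\max,n}\|\cV_n^{-1}\cS_n\| + \tfrac32 L_{c,n}\lambda_{\max,n}\|\mle\|^2$; dividing by $\lambda_{\min,n}$,
\[
\|\mle\| \le L_{\lambda,n}\|\cV_n^{-1}\cS_n\| + \tfrac32 L_{c,n} L_{\lambda,n}\|\mle\|^2.
\]

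It remains to turn this self-bounding inequality into the stated conclusion. Under the hypothesis $\|\cV_n^{-1}\cS_n\|\le 1/(3L_{c,n}L_{\lambda,n})$, I would run a standard continuity/bootstrap argument: the scalar inequality $t \le a + b t^2$ with $a = L_{\lambda,n}\|\cV_n^{-1}\cS_n\|$ and $b = \tfrac32 L_{c,n}L_{\lambda,n}$ has, for $4ab<1$, a small root $t_- = \tfrac{1-\sqrt{1-4ab}}{2b}$ and a large root, and since $u$ is strictly concave with $\mle$ continuous in the data and $\|\mle\|$ small when $\|\cS_n\|$ is small, $\|\mle\|$ lies on the small-root branch, giving $\|\mle\| \le t_- \le 2a = 2L_{\lambda,n}\|\cV_n^{-1}\cS_n\|$ — in fact the cruder bound $\|\mle\|\le 3L_{\lambda,n}\|\cV_n^{-1}\cS_n\|$ follows with room to spare, and one checks $L_{c,n}\|\mle\| \le 3L_{c,n}L_{\lambda,n}\|\cV_n^{-1}\cS_n\| \le 1$ so the linearization bound was legitimate. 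The main obstacle is this last bootstrapping: one must justify that $\mle$ sits on the correct branch of the quadratic (the inequality alone does not exclude a large $\|\mle\|$), which requires either the continuity-in-the-data argument sketched above or a direct concavity argument showing $u(\theta) < u(0)$ on the sphere $\{\|\theta\| = 3L_{\lambda,n}\|\cV_n^{-1}\cS_n\|\}$, forcing the maximizer to lie strictly inside it; and one must also confirm $\mle$ is finite, i.e. rule out the degenerate half-space configuration, which holds whenever $\cS_n$ is small relative to $\cV_n$.
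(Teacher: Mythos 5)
Your reduction to the self-bounding inequality $\Vert\mle\Vert\le a+b\Vert\mle\Vert^2$ with $a=L_{\lambda,n}\Vert\cV_n^{-1}\cS_n\Vert$ and $b=\tfrac32 L_{c,n}L_{\lambda,n}$ does not close under the lemma's hypothesis, so the gap is not only the branch-selection step you flag at the end. Because you bound the linearization error $R$ in norm by $\tfrac32 L_{c,n}\,\mle\trns\cV_n\mle\le\tfrac32 L_{c,n}\lambda_{\max,n}\Vert\mle\Vert^2$ but compare it against $\lambda_{\min,n}\Vert\mle\Vert$, the quadratic coefficient $b$ carries an extra factor $L_{\lambda,n}$ relative to $a$. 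At the threshold $\Vert\cV_n^{-1}\cS_n\Vert=1/(3L_{c,n}L_{\lambda,n})$ this gives $4ab=2L_{\lambda,n}\ge 2$, so $bt^2-t+a$ has no real roots, the inequality $t\le a+bt^2$ is satisfied by every $t\ge0$, and it excludes nothing; your small root $t_-\le 2a$ exists only when $4ab<1$, which the hypothesis does not guarantee (incidentally $e-1>\tfrac32$, so the intermediate bound $(e-1)|y|^2\le\tfrac32|y|^2$ is also off; even with the sharp constant $e-2$ one gets $4ab\approx 0.96\,L_{\lambda,n}$ at the threshold, which exceeds $1$ once $L_{\lambda,n}$ is slightly above $1$). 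As written, your argument would prove the lemma only under the stronger hypothesis $\Vert\cV_n^{-1}\cS_n\Vert\le c/(L_{c,n}L_{\lambda,n}^2)$ for a small constant $c$.

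The paper avoids this loss by never linearizing additively: it works with the radial derivative $\theta\trns\nabla\ckn(\theta)=\thetat\cS_n-\sum_i\thetat c_i\,p_i(1-p_i)\gamma_{p_i}(\thetat c_i)$ and uses the pointwise bound $\gamma_p'(z)\ge e^{-|z|}$ to keep the full quadratic form, obtaining $\theta\trns\nabla\ckn(\theta)\le\thetat\cS_n-e^{-L_{c,n}\Vert\theta\Vert}\,\thetat\cV_n\theta\le\lambda_{\min,n}\Vert\theta\Vert\bigl(L_{\lambda,n}\Vert\cV_n^{-1}\cS_n\Vert-\Vert\theta\Vert e^{-L_{c,n}\Vert\theta\Vert}\bigr)$, so only one factor $L_{\lambda,n}$ ever appears; on the sphere $\Vert\theta\Vert=3L_{\lambda,n}\Vert\cV_n^{-1}\cS_n\Vert$ the bracket is negative under the stated hypothesis (since $3e^{-1}>1$), and concavity then forces the maximizer into that ball, which simultaneously settles the finiteness and branch-selection issues you left open. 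Note also that your alternative suggestion of showing $\ckn(\theta)<\ckn(0)$ on that sphere loses a further factor $\tfrac12$ from integrating the radial derivative and cannot reach the constant $3$ under the stated threshold either; the sign of the radial derivative on the sphere is the estimate that makes the constants work.
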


For any fixed $\xi\in E_1$, there exist $L_c, L_\lambda$, 
such that $L_{c,n}< L_c$ and $L_{\lambda,n}<L_\lambda$ 
for all sufficiently large $n$.  Also in Theorem \ref{thm:usual-SLLN} we proved
that Skeptic can weakly force $E_1 \Rightarrow \lim_n \cV_n^{-1}\cS_n=0$.  From these
results we have the following proposition.

\begin{proposition}
\label{prop:mle0}  
In the same setting as in Theorem
\ref{thm:usual-SLLN} Skeptic can weakly force
$
E_1 \ \Rightarrow \ \lim_n \mle=0
$.
\end{proposition}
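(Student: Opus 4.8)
The plan is to use exactly the same Bayesian logistic strategy $\cP$ (with a prior supporting a neighborhood of the origin) that already weakly forces $E_1 \Rightarrow \lim_n \cV_n^{-1}\cS_n = 0$ in Theorem \ref{thm:usual-SLLN}, and to show that this \emph{single} strategy also weakly forces $E_1 \Rightarrow \lim_n \mle = 0$. Since $\cK_n^\cP = \int_{{\mathbb R}^d}\cK_n^\theta \pi(\theta)\,d\theta$ is a mixture of non-negative capital processes $\cK_n^\theta$, it is automatically non-negative, so only the divergence part of weak forcing needs checking: I must verify that $\limsup_n \cK_n^\cP(\xi) = \infty$ for every $\xi \in E_1$ with $\mle \not\to 0$.

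Fix such a path $\xi \in E_1$ and suppose, toward a contradiction, that $\limsup_n \cK_n^\cP(\xi) < \infty$. Because $\cP$ weakly forces $E_1 \Rightarrow \lim_n \cV_n^{-1}\cS_n = 0$ and $\xi \in E_1$, the failure of $\cK_n^\cP(\xi)$ to diverge forces $\lim_n \cV_n^{-1}\cS_n = 0$ along this path. Now I invoke the structure of $E_1$: conditions i)--iii) guarantee constants $L_c$ and $L_\lambda$ with $L_{c,n} < L_c$ and $L_{\lambda,n} = \lambda_{\max,n}/\lambda_{\min,n} < L_\lambda$ for all sufficiently large $n$. Since $\Vert\cV_n^{-1}\cS_n\Vert \to 0$, for all large $n$ we have $\Vert\cV_n^{-1}\cS_n\Vert \le 1/(3L_cL_\lambda) \le 1/(3L_{c,n}L_{\lambda,n})$, which is precisely the hypothesis of Lemma \ref{lem:small-mle}. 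The lemma then yields $\Vert\mle\Vert \le 3L_{\lambda,n}\Vert\cV_n^{-1}\cS_n\Vert \le 3L_\lambda\Vert\cV_n^{-1}\cS_n\Vert \to 0$, so $\mle \to 0$, contradicting $\mle \not\to 0$. Hence $\limsup_n \cK_n^\cP(\xi) = \infty$, as required.

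The argument is essentially a two-line deduction once Theorem \ref{thm:usual-SLLN} and Lemma \ref{lem:small-mle} are available, so I do not anticipate a genuine obstacle; the only point demanding care is the direction of the inequalities when replacing the sample-size-dependent quantities $L_{c,n},L_{\lambda,n}$ by the uniform bounds $L_c,L_\lambda$ valid on the chosen path, together with the observation that "for all sufficiently large $n$" is enough because weak forcing only constrains the $\limsup$. An equivalent presentation avoids the contradiction step: split the paths to be forced into those with $\cV_n^{-1}\cS_n \not\to 0$ (handled verbatim by Theorem \ref{thm:usual-SLLN}, where $\cK_n^\cP$ already diverges) and those with $\cV_n^{-1}\cS_n \to 0$ (on which Lemma \ref{lem:small-mle} gives $\mle \to 0$, so the event holds and there is nothing to force); either phrasing delivers the conclusion with the same strategy $\cP$.
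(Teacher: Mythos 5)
Your argument is correct and is essentially the paper's own proof: the paper likewise combines Theorem \ref{thm:usual-SLLN} (the same Bayesian logistic strategy forcing $E_1 \Rightarrow \cV_n^{-1}\cS_n \to 0$) with the path-wise bounds $L_{c,n}<L_c$, $L_{\lambda,n}<L_\lambda$ on $E_1$ and Lemma \ref{lem:small-mle} to conclude $\mle \to 0$. Your handling of the inequality directions and the "sufficiently large $n$" point matches the intended deduction, so there is nothing to add.
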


The rest  of this subsection is devoted to a proof of Lemma \ref{lem:small-mle}.
Consider the inner product $\theta\trns \nabla \ckn(\theta)=\theta\trns \grad \ckn(\theta)$
of $\theta$ and the gradient of $u(\theta)$. 
If $\theta\trns \nabla \ckn(\theta)\le 0$, then the  gradient points toward the interior 
of the ball with radius $r=\Vert\theta\Vert$ as shown in Figure \ref{fig:1}.
If $\theta\trns \nabla \ckn(\theta)\le 0$ for all $\theta$ with $\Vert\theta\Vert=r$, then
$\Vert \mle\Vert \le r$.
\begin{figure}[htbp]
\begin{center}
\includegraphics[width=4cm]{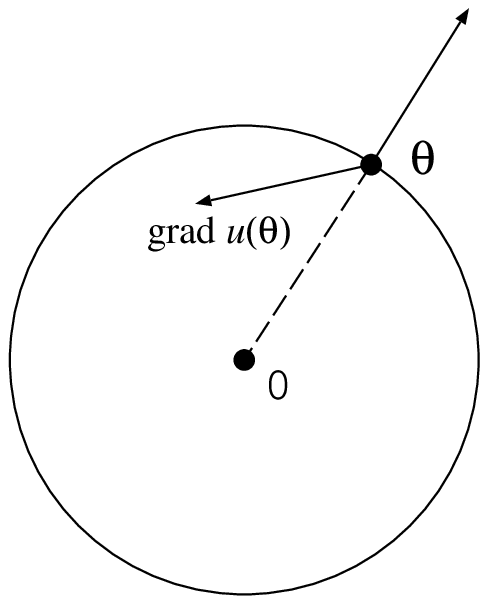}
\caption{Gradient of $\ckn(\theta)$}
\label{fig:1}
\end{center}
\end{figure}
This can be seen as follows.  Suppose $\Vert \mle\Vert>r$.
Let $\tilde\theta$ be the maximizer of $\ckn(\theta)$ on the
sphere (the boundary of the ball).  Then at $\tilde\theta$ the 
gradient of $\nabla \ckn(\tilde \theta)$ is a 
positive  multiple of $\theta$ and this contradicts 
$\tilde \theta\trns \nabla \ckn(\tilde \theta)\le 0$.

As in the previous subsection, using this time 
the lower bound in \eqref{eq:second-deriv-2}, we have
\[
\thetat \nabla u(\theta) \le  
\theta\trns \cS_n - \thetat\cV_n \theta e^{-L_{c,n}\Vert \theta\Vert}.
\]
Now 
\[
|\thetat\cS_n| =|\thetat \cV_n \cV_n^{-1} \cS_n| \le \Vert \thetat \cV_n \Vert \cdot
\Vert  \cV_n^{-1} \cS_n \Vert 
\]
and 
\[
\Vert \thetat \cV_n \Vert^2 =\thetat \cV_n^2 \theta \le 
\Vert \theta\Vert^2 \lambda_{\max,n}^2.
\]
Hence
\[
|\thetat\cS_n| \le  \Vert \theta\Vert \lambda_{\max,n}\Vert  \cV_n^{-1} \cS_n \Vert .
\]
Furthermore
\[
\thetat\cV_n \theta e^{-L_{c,n}\Vert \theta\Vert} \ge 
\lambda_{\min,n} \Vert\theta\Vert^2  e^{-L_{c,n}\Vert \theta\Vert} .
\]
Therefore
\[
\thetat \nabla u(\theta) %
\le
\lambda_{\min,n}\Vert \theta\Vert \big( L_{\lambda,n} \Vert  \cV_n^{-1} \cS_n \Vert - \Vert\theta\Vert e^{-L_{c,n}\Vert \theta\Vert} \big).
\]
For $\Vert\theta\Vert=  3 L_{\lambda,n} \Vert  \cV_n^{-1} \cS_n \Vert$
\[
L_{\lambda,n} \Vert  \cV_n^{-1} \cS_n \Vert - \Vert\theta\Vert e^{-L_{c,n}\Vert \theta\Vert} 
= L_{\lambda,n} \Vert  \cV_n^{-1} \cS_n \Vert ( 1-  3e^{-3  L_{c,n} L_{\lambda,n} \Vert\cV_n^{-1} \cS_n\Vert})
\]
Then for $\Vert \cV_n^{-1} \cS_n \Vert \le 1/(3L_{c,n} L_{\lambda,n})$
\[
3e^{-3 L_{\lambda,n} L_{c,n}\Vert\cV_n^{-1} \cS_n\Vert}\ge 3 e^{-1}>1.
\]
Hence, if $\Vert \cV_n^{-1} \cS_n \Vert \le 1/(3L_{c,n} L_{\lambda,n})$,  
we have $\thetat \nabla\ckn(\theta) < 0$ 
for all $\theta$ with $\Vert\theta\Vert=  3 L_{\lambda,n} \Vert  \cV_n^{-1} \cS_n \Vert$.  
By the remark just after Proposition
\ref{prop:mle0}, this completes the proof of 
Lemma \ref{lem:small-mle}.

\subsubsection{Behavior of the hindsight strategy}
\label{sec:hindsight}

We summarize properties of $\log\cK_n^{\mle}$ in view of the standard theory of exponential families
(\cite{barndorff-nielsen}) in statistical inference.
Define 
\[
\psi_i(\theta)=\log(1+p_i (e^{\theta c_i}-1)),
\quad
\psi(\theta)= \sum_{i=1}^n \psi_i(\theta).
\]
Note that $\psi_i(\theta)$ is 
the cumulant generating function (potential function) for the logistic regression model,
which is an exponential family model with the natural parameter $\theta$.
Hence  each $\psi_i(\theta)$ and  $\psi(\theta)$ are convex in $\theta$. 
Indeed  by \eqref{eq:gamma-deriv}, the Hessian matrix $H_{\psi_i}(\theta)$ of $\psi_i$ is given as
\[
H_{\psi_i}(\theta)=
c_i c_i\trns \frac{p_i (1-p_i)e^{\thetat c_i}}{(1+ p_i (e^{\thetat c_i}-1))^2},
\]
which is non-negative definite.  The Hessian matrix 
\[
H_{\psi}(\theta)=\sum_{i=1}^n H_{\psi_i}(\theta)
\]
of $\psi$ is positive definite if $\cV_n$ is
positive definite, which is the Fisher information matrix 
in terms of the natural parameter $\theta$.

Convexity of $\psi_i$ implies concavity of  $\log \cK_n^\theta=\thetat \cT_n - \psi(\theta)$, where
\[
\cT_n = \sum_{i=1}^n c_i x_i = \cS_n + \sum_{i=1}^n c_i p_i.
\]
Hence if the maximum of 
$\log \cK_n^\theta$ is attained at  a finite value $\mle$, then the ``maximum likelihood estimate''
$\mle$ satisfies %
``the likelihood equation'' 
\[
\frac{\partial}{\partial \theta} \log \cK_n^\theta=0
\]
or equivalently
\begin{equation}
\label{eq:likelihood-equation2}
\cT_n = \nabla \psi(\mle).
\end{equation}
The likelihood equation can also be written as
\[
0=\sum_{i=1}^n (x_i - \hat p_i^*)c_i,  \qquad
\hat p_i^*  = \hat p_{i;n}^*
= \frac{p_i e^{\mle c_i}}{1+p_i (e^{\mle c_i}-1)}.
\]
From this it follows that $\mle=0$ if and only if $\cT_n = \sum_{i=1}^n c_i p_i$.

Regard \eqref{eq:likelihood-equation2} as determining $\mle$ in terms of $t=\cT_n$, i.e., 
$\mle = \mle(t), t=\cT_n$.  This is the inverse map of $t=\nabla \psi(\theta)$.
Differentiating $t=\nabla \psi(\theta)$ again with respect to $\theta$ we obtain the Jacobi matrix
\[
J=\frac{\partial t}{\partial \theta}=H_\psi(\theta)
\]
as the Hessian matrix of $\psi$. Hence the Jacobi matrix $\partial \mle/\partial \cT_n$
is written as
\begin{equation}
\label{eq:hessian-inv}
\frac{\partial \mle}{\partial \cT_n}=H_\psi(\mle(\cT_n))^{-1}.
\end{equation}

Now $\log\cK_n^{\mle}=\log\cK_n^{\mle(\cT_n)}$ is the Legendre transformation (cf.\ Chapter 3 of
\cite{arnold1989}) of $\log \cK_n^\theta$:
\[
\log\cK_n^{\mle(t)} = \mle(t)\trns t - \psi(\mle(t)), \qquad t=\cT_n.
\]
Differentiating $\log\cK_n^{\mle(t)}$ with respect to $t$, by \eqref{eq:likelihood-equation2} we obtain
\begin{equation}
\label{eq:mle-t}
\frac{\partial}{\partial t}\log\cK_n^{\mle(t)} = \mle(t) + (t - \nabla \psi(\mle(t))\frac{\partial\mle}{\partial t}
=\mle(t).
\end{equation}
By \eqref{eq:hessian-inv} the Hessian matrix of $\log\cK_n^{\mle(t)}$ 
is given by $H_\psi(\mle(t))^{-1}$.

We are now ready to prove the following proposition.
\begin{proposition}
\label{prop:small-mle}
With the same setting as in Lemma \ref{lem:small-mle}, 
\[
\Vert \cV_n^{-1} \cS_n \Vert \le \frac{1}{3L_{c,n} L_{\lambda,n}} \ \ \Rightarrow\ \ 
e^{-C_n \Vert \cV_n^{-1} \cS_n \Vert} \le \frac{\log \cK_n^{\mle}}{\cS_n\trns \cV_n \cS_n/2} \le 
e^{C_n \Vert \cV_n^{-1} \cS_n \Vert},
\]
where $C_n = 3  L_{c,n} L_{\lambda,n}$.
\end{proposition}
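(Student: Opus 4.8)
The plan is to exploit the exponential-family / Legendre structure recorded in \eqref{eq:mle-t} and \eqref{eq:hessian-inv}. Set $\varphi(t)=\log\cK_n^{\mle(t)}$, where $\mle(t)$ is the inverse of the map $t=\nabla\psi(\theta)$, and let $t_0=\sum_{i=1}^n c_i p_i=\nabla\psi(0)$. As noted just after the likelihood equation \eqref{eq:likelihood-equation2}, $\mle(t)=0$ exactly when $t=\sum_{i=1}^n c_ip_i$, so $\varphi(t_0)=\log\cK_n^0=0$; by \eqref{eq:mle-t}, $\nabla\varphi(t_0)=\mle(t_0)=0$; and by \eqref{eq:hessian-inv} the Hessian is $\nabla^2\varphi(t)=H_\psi(\mle(t))^{-1}$, which is smooth and positive definite wherever $\mle(\cdot)$ is defined, since $H_\psi(\theta)\succeq e^{-\Vert\theta\Vert L_{c,n}}\cV_n$ and $\cV_n$ is positive definite when $\lambda_{\min,n}>0$. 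Provided the segment from $t_0$ to $\cT_n$ lies in the (open) domain of $\mle(\cdot)$ — verified in the next step — Taylor's theorem with Lagrange remainder applied to $\varphi$ along that segment, together with $\cT_n-t_0=\cS_n$, gives
\[
\log\cK_n^{\mle}=\varphi(\cT_n)=\frac12\,\cS_n\trns H_\psi\!\big(\mle(\tilde t)\big)^{-1}\cS_n,\qquad \tilde t=t_0+\tau\cS_n,\ \ \tau\in[0,1].
\]

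Next I would bound $\Vert\mle(t_0+\tau\cS_n)\Vert$ for every $\tau\in[0,1]$. The key point is that $\cV_n$, $L_{c,n}$ and $L_{\lambda,n}$ depend only on $p_1,c_1,\dots,p_n,c_n$ and not on the outcomes $x_i$, while $\mle(\cdot)$ is the inverse of $\nabla\psi$ irrespective of the data; hence the implication in Lemma~\ref{lem:small-mle} holds verbatim with $\cS_n$ replaced by $\tau\cS_n$ (equivalently, with $\cT_n$ replaced by $t_0+\tau\cS_n$). Since $\Vert\cV_n^{-1}(\tau\cS_n)\Vert=\tau\Vert\cV_n^{-1}\cS_n\Vert\le\Vert\cV_n^{-1}\cS_n\Vert\le 1/(3L_{c,n}L_{\lambda,n})$ under the hypothesis, Lemma~\ref{lem:small-mle} shows $\mle(t_0+\tau\cS_n)$ is finite with $\Vert\mle(t_0+\tau\cS_n)\Vert\le 3L_{\lambda,n}\tau\Vert\cV_n^{-1}\cS_n\Vert\le 3L_{\lambda,n}\Vert\cV_n^{-1}\cS_n\Vert$. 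In particular the whole segment lies in the domain of $\mle(\cdot)$, justifying the expansion above, and for the remainder point $\tilde t$,
\[
\big|\mle(\tilde t)\trns c_i\big|\le\Vert\mle(\tilde t)\Vert\,\Vert c_i\Vert\le 3L_{c,n}L_{\lambda,n}\Vert\cV_n^{-1}\cS_n\Vert=C_n\Vert\cV_n^{-1}\cS_n\Vert\qquad(1\le i\le n).
\]

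Finally I would feed this into the Hessian. From $H_{\psi_i}(\theta)=c_ic_i\trns\,p_i(1-p_i)\,\gamma_{p_i}'(\thetat c_i)$ with $\gamma_p'(y)=e^y/(1+p(e^y-1))^2$ and $e^{-|y|}\le\gamma_p'(y)\le e^{|y|}$ from \eqref{eq:second-deriv-2}, and since each $c_ic_i\trns p_i(1-p_i)$ is non-negative definite, the bound on $|\mle(\tilde t)\trns c_i|$ yields, summing over $i$,
\[
e^{-C_n\Vert\cV_n^{-1}\cS_n\Vert}\,\cV_n\ \preceq\ H_\psi(\mle(\tilde t))\ \preceq\ e^{C_n\Vert\cV_n^{-1}\cS_n\Vert}\,\cV_n
\]
in the Loewner order, hence, inverting (which reverses the order),
\[
e^{-C_n\Vert\cV_n^{-1}\cS_n\Vert}\,\cV_n^{-1}\ \preceq\ H_\psi(\mle(\tilde t))^{-1}\ \preceq\ e^{C_n\Vert\cV_n^{-1}\cS_n\Vert}\,\cV_n^{-1}.
\]
Evaluating these quadratic forms at $\cS_n$ and using the identity for $\log\cK_n^{\mle}$ from the first step gives
\[
e^{-C_n\Vert\cV_n^{-1}\cS_n\Vert}\,\frac{\cS_n\trns\cV_n^{-1}\cS_n}{2}\ \le\ \log\cK_n^{\mle}\ \le\ e^{C_n\Vert\cV_n^{-1}\cS_n\Vert}\,\frac{\cS_n\trns\cV_n^{-1}\cS_n}{2},
\]
which is the assertion after dividing by $\cS_n\trns\cV_n^{-1}\cS_n/2$ (the case $\cS_n=0$ being trivial). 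I expect the only real obstacle to be the control of the remainder point $\tilde t$: Lemma~\ref{lem:small-mle} by itself bounds the maximum likelihood estimate only at the true data point $\cT_n$, and it is precisely the outcome-independence of $\cV_n$, $L_{c,n}$, $L_{\lambda,n}$ that lets one apply it along the whole segment and so obtain the sharp exponent $C_n\Vert\cV_n^{-1}\cS_n\Vert$ — expanding $\log\cK_n^{\theta}$ directly in $\theta$ about the origin would only give a constant multiple of this in the exponent. (This argument also indicates that the denominator in the statement should read $\cS_n\trns\cV_n^{-1}\cS_n/2$.)
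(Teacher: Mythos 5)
Your proof is correct and takes essentially the same approach as the paper's: the paper also works with the Legendre transform along the segment, setting $g(s)=\log\cK_n^{\mle(\bar\cT_0+s\cS_n)}$ with $\bar\cT_0=\sum_{i=1}^n c_ip_i$, using $g(0)=g'(0)=0$ and the second-order remainder $g''(u)=\cS_n\trns H_\psi(\mle(\bar\cT_0+u\cS_n))^{-1}\cS_n$ (in integral rather than Lagrange form), and then applying Lemma \ref{lem:small-mle} along the whole segment together with \eqref{eq:second-deriv-2} to sandwich the Hessian between multiples of $\cV_n$. Your remark that the denominator should read $\cS_n\trns\cV_n^{-1}\cS_n/2$ matches what the paper's own proof and Corollary \ref{cor:hindsight} actually establish.
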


\begin{proof}
For  given $\cT_n$, $\bar \cT_0 = \sum_{i=1}^n c_i p_i$  and for $s\in [0,1]$, consider 
\[
g(s)= \mle(\bar \cT_0 + s \cS_n)\trns (\bar \cT_0 + s \cS_n) - \psi(\mle(\bar \cT_0 + s\cS_n)).
\]
Then $\log \cK_n^{\mle(\cT_n)} = g(1)$.  It is easily seen that  $g(0)=0$. By \eqref{eq:mle-t}
\[
g'(s)= \mle(\bar \cT_0 + s \cS_n)\trns \cS_n.
\]
Again it is easily seen that $g'(0)=0$, since $\mle(\bar \cT_0)=0$.  Then
\[
g(1)=\int_0^1 \int_0^s g''(u)du  ds.
\]
Now
\[
g''(u)=\cS_n\trns H_\psi(\mle(\bar \cT_0 + u\cS_n))^{-1} \cS_n.
\]
By \eqref{eq:second-deriv-2}
\begin{align*}
e^{-\Vert \mle(\bar \cT_0 + u\cS_n) \Vert L_{c,n}} \cS_n\trns \cV_n^{-1} \cS_n 
&\le \cS_n\trns H_\psi(\mle(\bar\cT_0 + u\cS_n))^{-1} \cS_n \\
& \le e^{\Vert \mle(\bar\cT_0 + u\cS_n) \Vert L_{c,n}} \cS_n\trns \cV_n^{-1}  \cS_n. 
\end{align*}
Also $\int_0^1 \int_0^s 1 du  ds = 1/2$.  
Furthermore by Lemma \ref{lem:small-mle}, if 
$\Vert \cV_n^{-1} \cS_n \Vert \le 1/(3L_{c,n} L_{\lambda,n})$  then
$\Vert\mle(\bar\cT_0 + u\cS_n) \Vert \le 3 L_{\lambda,n} \Vert \cV_n^{-1} \cS_n \Vert$
for all $0\le u\le 1$.
Combining these results we have the proposition.
\end{proof}

As in Proposition \ref{prop:small-mle} we have the following corollary.
\begin{corollary}
\label{cor:hindsight}
In the same setting as in Theorem
\ref{thm:usual-SLLN} Skeptic can weakly force
\[
E_1 \ \Rightarrow \ 
\lim_n \frac{\log \cK_n^{\mle}}{\cS_n\trns \cV_n^{-1} \cS_n/2}  = 1.
\]
\end{corollary}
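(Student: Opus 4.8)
The plan is to show that the same Bayesian logistic strategy used to prove Theorem~\ref{thm:usual-SLLN} --- call it $\cP$, with a prior supporting a neighborhood of the origin --- already weakly forces the stated limit, so that no new betting strategy is needed. Since $\cK_n^{\cP}$ is automatically never negative, it suffices to fix an arbitrary path $\xi$ lying outside the conditional event in the corollary, i.e.\ $\xi\in E_1$ with $\log\cK_n^{\mle}/(\cS_n\trns\cV_n^{-1}\cS_n/2)\not\to 1$ along $\xi$, and to show $\limsup_n\cK_n^{\cP}(\xi)=\infty$. I argue by contradiction: suppose $\cK_n^{\cP}(\xi)$ stays bounded.

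Because $\cP$ weakly forces $E_1\Rightarrow\lim_n\cV_n^{-1}\cS_n=0$ by Theorem~\ref{thm:usual-SLLN}, boundedness of $\cK_n^{\cP}(\xi)$ together with $\xi\in E_1$ forces $\lim_n\cV_n^{-1}\cS_n=0$ along $\xi$. Since $\xi\in E_1$, as noted just before Proposition~\ref{prop:mle0} there are constants $L_c,L_\lambda$ and an index $n_0$ with $L_{c,n}<L_c$ and $L_{\lambda,n}<L_\lambda$ for all $n\ge n_0$. As $L_{c,n},L_{\lambda,n}$ are bounded, the threshold $1/(3L_{c,n}L_{\lambda,n})$ stays bounded below by $1/(3L_cL_\lambda)>0$, whereas $\Vert\cV_n^{-1}\cS_n\Vert\to 0$; hence the hypothesis $\Vert\cV_n^{-1}\cS_n\Vert\le 1/(3L_{c,n}L_{\lambda,n})$ of Proposition~\ref{prop:small-mle} holds for all large $n$.

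For those $n$, Proposition~\ref{prop:small-mle} gives $e^{-C_n\Vert\cV_n^{-1}\cS_n\Vert}\le \log\cK_n^{\mle}/(\cS_n\trns\cV_n^{-1}\cS_n/2)\le e^{C_n\Vert\cV_n^{-1}\cS_n\Vert}$ with $C_n=3L_{c,n}L_{\lambda,n}<3L_cL_\lambda$; since the exponents satisfy $C_n\Vert\cV_n^{-1}\cS_n\Vert\to 0$, both bounds tend to $1$, and by the squeeze $\log\cK_n^{\mle}/(\cS_n\trns\cV_n^{-1}\cS_n/2)\to 1$ along $\xi$. (In the degenerate event $\cS_n=0$, hence $\cS_n\trns\cV_n^{-1}\cS_n=0$, one has $\mle=0$ and $\log\cK_n^{\mle}=0$, and the bound is read in the product form $e^{-C_n\Vert\cV_n^{-1}\cS_n\Vert}\,\cS_n\trns\cV_n^{-1}\cS_n/2\le\log\cK_n^{\mle}\le e^{C_n\Vert\cV_n^{-1}\cS_n\Vert}\,\cS_n\trns\cV_n^{-1}\cS_n/2$, which is vacuously true.) This contradicts the choice of $\xi$, so $\limsup_n\cK_n^{\cP}(\xi)=\infty$, and $\cP$ weakly forces the corollary.

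There is no genuinely hard step here: the whole content lies in Proposition~\ref{prop:small-mle} and Theorem~\ref{thm:usual-SLLN}. The only points needing a little care are verifying that the hypothesis of Proposition~\ref{prop:small-mle} is eventually met --- which is exactly where the boundedness clauses of $E_1$ and the already-forced convergence $\cV_n^{-1}\cS_n\to 0$ are combined --- and the harmless $0/0$ convention when $\cS_n$ vanishes.
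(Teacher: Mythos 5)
Your proposal is correct and follows essentially the same route as the paper: the corollary is obtained by combining the weak forcing of $E_1 \Rightarrow \lim_n \cV_n^{-1}\cS_n = 0$ from Theorem \ref{thm:usual-SLLN} (using the same Bayesian logistic strategy, with the bounds $L_{c,n}<L_c$, $L_{\lambda,n}<L_\lambda$ valid on $E_1$) with the two-sided squeeze of Proposition \ref{prop:small-mle}, whose hypothesis is eventually satisfied once $\Vert\cV_n^{-1}\cS_n\Vert\to 0$. Your explicit treatment of the weak-forcing logic and of the degenerate case $\cS_n=0$ only makes precise what the paper leaves implicit.
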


\subsubsection{Laplace method for evaluating 
the difference of the hindsight strategy and the logistic strategy}
\label{sec:diff-hindsight}
In the last subsection we clarified the behavior of the capital process
for the hindsight strategy.  Now we employ the standard Laplace method to
evaluate the difference of the hindsight strategy and the logistic strategy
(Section 5 of \cite{cover-universal-portfolios}, Chapter 3.1 of \cite{jensen}).

\begin{lemma}
\label{lem:laplace}
Let $\pi$ be a prior density supporting a neighborhood of the origin and let $\cK_n^\pi$ 
denote its capital process.  For  $\xi \in E_1$ such that $\lim_n\cV_n^{-1}\cS_n=0$,
\begin{equation}
\label{eq:laplace}
\lim_n \frac{ \log \cK_n^{\mle} - \log  \cK_n^\pi }{(1/2)\log \det \cV_n} = 0.
\end{equation}
\end{lemma}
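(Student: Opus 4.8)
The plan is to control the Bayesian mixture $\cK_n^\pi=\int_{\mathbb{R}^d}e^{u(\theta)}\pi(\theta)\,d\theta$, where $u(\theta)=\log\cK_n^\theta=\theta\trns\cT_n-\psi(\theta)$ is concave with unique maximizer $\mle$, by a Laplace expansion around $\mle$ and to read off that the cost of concentrating the integral near $\mle$ is of lower order than $\log\det\cV_n$. One inequality is free: since $u(\theta)\le u(\mle)$ and $\int\pi=1$, we have $\cK_n^\pi\le\cK_n^{\mle}$, hence $\log\cK_n^{\mle}-\log\cK_n^\pi\ge 0$. So the whole task reduces to the matching upper bound, i.e.\ a lower bound on $\cK_n^\pi$ showing that the difference is $o(\log\det\cV_n)$.

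First I would fix $\xi\in E_1$ with $\cV_n^{-1}\cS_n\to 0$; by Proposition \ref{prop:mle0} then $\mle\to 0$, so for all large $n$ the maximizer and a shrinking neighborhood of it lie in the region where $\pi\ge\pi_{\min}>0$. To obtain the sharp normalization I would integrate over the Fisher-information ellipsoid $E_{\rho_n}=\{\theta:(\theta-\mle)\trns H_\psi(\mle)(\theta-\mle)\le\rho_n^2\}$, whose volume is $\rho_n^d\,\omega_d(\det H_\psi(\mle))^{-1/2}$. On $E_{\rho_n}$ the pointwise Hessian bound \eqref{eq:second-deriv-2} gives $H_\psi(\theta)\preceq e^{L_c(\Vert\mle\Vert+\rho_n)}H_\psi(\mle)$, so a second-order Taylor expansion of $u$ at $\mle$ yields $u(\theta)\ge u(\mle)-\tfrac12 e^{L_c(\Vert\mle\Vert+\rho_n)}\rho_n^2$ on the ellipsoid. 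Combining the volume and this pointwise lower bound,
\[
\log\cK_n^{\mle}-\log\cK_n^\pi\ \le\ \tfrac12 e^{L_c(\Vert\mle\Vert+\rho_n)}\rho_n^2-d\log\rho_n+\tfrac12\log\det H_\psi(\mle)+O(1),
\]
and since $\mle\to 0$ the same Hessian bound gives $|\log\det H_\psi(\mle)-\log\det\cV_n|\le dL_c\Vert\mle\Vert\to 0$, so $\det H_\psi(\mle)$ may be replaced by $\det\cV_n$ up to an $o(1)$ error.

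The hard part — and the step I expect to be the main obstacle — is the normalization term $\tfrac12\log\det\cV_n$ appearing on the right-hand side: to reach \eqref{eq:laplace} one must show the entire bound is $o(\log\det\cV_n)$, i.e.\ that the Gaussian volume factor $(\det H_\psi(\mle))^{-1/2}$, whose logarithm is built from exactly the same Fisher information $\cV_n$ that sits in the denominator, contributes only a lower-order amount. The quadratic-decay term $\tfrac12 e^{L_c(\cdots)}\rho_n^2$ and the log-volume term $-d\log\rho_n$ can be balanced by a careful choice of the radius $\rho_n$, together with a tail estimate for $\theta\notin E_{\rho_n}$ that concavity of $u$ renders negligible relative to the bulk; but extracting the additional cancellation needed in the normalization itself is the quantitative heart of the lemma, since the two-sided bounds \eqref{eq:second-deriv-2} alone only pin the difference down to order $\log\det\cV_n$. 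I would therefore concentrate the effort on a finer analysis of the interplay between the prior mass near the origin and the exact concavity profile of $u$ at $\mle$, which is what must force the Laplace normalization to be $o(\log\det\cV_n)$ rather than a fixed fraction of it.
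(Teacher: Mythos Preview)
Your approach \emph{is} the standard Laplace method, which is exactly what the paper invokes (its entire proof is the Taylor expansion $u(\theta)=u(\mle)-\tfrac12(\theta-\mle)\trns H_\psi(\tilde\theta_n)(\theta-\mle)$ followed by the words ``by the standard Laplace method''). The lower bound via integration over a Fisher ellipsoid, the uniform Hessian control from \eqref{eq:second-deriv-2}, and the replacement of $\det H_\psi(\mle)$ by $\det\cV_n$ using $\mle\to 0$ are all the right ingredients, and your displayed inequality is correct.

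The difficulty you isolate in the last paragraph is not a genuine obstacle; it is an artifact of a typo in the lemma. The limit in \eqref{eq:laplace} should be $1$, not $0$: the Gaussian normalization $\tfrac12\log\det H_\psi(\mle)\sim\tfrac12\log\det\cV_n$ that you extract is \emph{precisely} the leading term of $\log\cK_n^{\mle}-\log\cK_n^\pi$, and no further cancellation occurs. This is confirmed by the paper's own proof of Theorem~\ref{thm:main}, where the lemma is invoked to obtain
\[
\log\cK_n^\pi=\tfrac12\log\det\cV_n\Bigl(\frac{\cS_n\trns\cV_n^{-1}\cS_n}{\log\det\cV_n}-1+o(1)\Bigr),
\]
and the ``$-1$'' on the right is exactly the limit $1$ in the corrected statement. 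With that correction your computation already yields
\[
\limsup_n \frac{\log\cK_n^{\mle}-\log\cK_n^\pi}{(1/2)\log\det\cV_n}\le 1
\]
once you choose $\rho_n$ bounded (or tending to $0$ slowly): the Euclidean radius of $E_{\rho_n}$ is at most $\rho_n/\sqrt{\lambda_{\min,n}}\to 0$, so the ellipsoid eventually lies where $\pi\ge\pi_{\min}$, while $\tfrac12e^{L_c(\cdot)}\rho_n^2$ and $-d\log\rho_n$ are both $o(\log\det\cV_n)$. This $\limsup\le 1$ direction is the only one needed for Theorem~\ref{thm:main}; the matching $\liminf\ge 1$ follows from the symmetric global Gaussian upper bound $u(\theta)\le u(\mle)-\tfrac12 e^{-L_c\Vert\theta\Vert}(\theta-\mle)\trns\cV_n(\theta-\mle)$, not from the trivial inequality $\cK_n^\pi\le\cK_n^{\mle}$.
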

\begin{proof}
For $\theta$ close to the origin, 
expanding $\log\cK_n^\theta$ around $\mle$ we have
\[
\log\cK_n^\theta = \log \cK_n^{\mle} - \frac{1}{2} (\theta - \mle)\trns H_\psi(\tilde \theta_n)(\theta - \mle),
\]
where $\tilde \theta_n$ is on the line segment joining $\theta$ and $\mle$.  Hence
\[
\cK_n^\theta = \cK_n^{\mle} \times \exp(-\frac{1}{2} (\theta - \mle)\trns H_\psi(\tilde \theta_n)(\theta - \mle)).
\]
Now by the standard Laplace method we obtain \eqref{eq:laplace}.
\end{proof}

Finally we give a proof of Theorem \ref{thm:main}.
\begin{proof}[Proof of Theorem \ref{thm:main}]
By Corollary \ref{cor:hindsight} and Lemma \ref{lem:laplace}
\[
\log \cK_n^\pi = \frac{1}{2} \log\det \cV_n \; 
\big( \frac{ \cS_n\trns \cV_n^{-1}\cS_n} {\log \det \cV_n} - 1 + o(1)\big).
\]
Hence if $\limsup_n \cS_n\trns \cV_n^{-1}\cS_n/ \log \det \cV_n> 1$, then
$\limsup_n \log \cK_n^\pi = \infty$.
\end{proof}

\subsection{Monotonicity with respect to the forecast probability}
\label{subsec:monotonicity}

Here we consider the case that $\log (p_n/(1-p_n))$ itself is an element of the vector of the side information  $c_n$
and hence is multiplied by a coefficient in \eqref{eq:logistic-simple}.
For notational convenience we here eliminate $\log (p_n/(1-p_n))$ from $c_n$ and write 
\eqref{eq:logistic-simple} as
\begin{equation}
\label{eq:logistic-monotone}
\log \frac{\hat p_n}{1-\hat p_n}= \beta \log\frac{p_n}{1-p_n} + \tau_n,
\end{equation}
where $\tau_n$ denotes the effect of side information other than 
$\log (p_n/(1-p_n))$.  Intuitively $\beta$ represents how much trust Skeptic puts in Forecaster.
If $\beta=0$ then  Skeptic entirely ignores Forecaster's $p_n$ and if $\beta=1$ then Skeptic
takes $p_n$ for granted.  The value of $\beta\in (0,1)$ corresponds to partial trust in $p_n$.
It is somewhat surprising to see that $\beta > 1$ 
in the case of probability of precipitation announced by 
the Japan Meteorological Agency in  Section \ref{subsec:JMA}.

We now investigate  how $\nu_n$ in \eqref{eq:nu_n} behaves with respect to $p_n$ for 
given $p_1, c_1, x_1, \dots, p_{n-1},c_{n-1},\allowbreak x_{n-1}$.
This is an important question from the viewpoint of defensive forecasting (\cite{defensive-forecasting},
\cite{df-linear}), because in defensive forecasting we want to obtain $p_n$ for which $\nu_n=0$.
For notational simplicity we now omit the subscript $n$ and write 
\eqref{eq:hatpn} as
\[
\hat p = \frac{ p \left(\frac{p}{1-p}\right)^{\beta-1} e^\tau}{1+p \left(\frac{p}{1-p}\right)^{\beta-1} e^\tau}.
\]
Then 
\[
\nu(p)=\frac{\hat p- p}{p(1-p)} = \frac{p^{\beta-1} e^\tau - (1-p)^{\beta-1}}{p^\beta e^\tau + (1-p)^\beta}.
\]
Differentiating this with respect to $p$ we obtain
\[
\frac{d\nu(p)}{dp}=\frac{ - e^{2\tau} p^{2(\beta-1)}  + e^\tau p^{\beta-2}(1-p)^{\beta-2}(\beta-2 + 2p(1-p)) - (1-p)^{2\beta-2}}
{(p^\beta e^\tau + (1-p)^\beta)^2}.
\]
The numerator of $d\nu(p)/dp$ can be written as
\[
-(e^\tau p^{\beta-1} - (1-p)^{\beta -1})^2 + e^\tau (\beta-1) p^{\beta-2} (1-p)^{\beta-2},
\]
which is non-positive for $\beta\le 1$. 
Hence  we have the following proposition.
\begin{proposition}
\label{prop:monotonicity}
Under the logistic regression model \eqref{eq:logistic-monotone}, for $\beta\le 1$ 
the betting ratio $\nu_n(p_n)$ is monotone decreasing in $p_n$.
\end{proposition}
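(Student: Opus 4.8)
The plan is to reduce the statement to the explicit sign analysis of $d\nu(p)/dp$ that the paragraphs preceding the proposition already assemble, and to organize the verification of each step cleanly. First I would substitute the reparametrized model \eqref{eq:logistic-monotone} into the solved form \eqref{eq:hatpn}: writing $\thetat c_n=(\beta-1)\log(p_n/(1-p_n))+\tau_n$ and dropping the subscript $n$, one has $e^{\thetat c_n}=(p/(1-p))^{\beta-1}e^\tau$, and a short simplification turns \eqref{eq:hatpn} into $\hat p = p^\beta e^\tau/(p^\beta e^\tau+(1-p)^\beta)$. Feeding this into the Kelly betting ratio \eqref{eq:nu_n} and cancelling the common factor $p(1-p)$ from $\hat p - p$ gives the closed form
\[
\nu(p)=\frac{p^{\beta-1}e^\tau-(1-p)^{\beta-1}}{p^\beta e^\tau+(1-p)^\beta},
\]
valid for every $p\in(0,1)$ and every real $\tau$, with $\beta$ a fixed parameter of the model.

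Next I would differentiate $\nu(p)$ by the quotient rule. Writing $N(p)$ and $D(p)$ for its numerator and denominator, a convenient observation is that $D'(p)=\beta N(p)$; hence $\nu'(p)=(N'(p)D(p)-\beta N(p)^2)/D(p)^2$, and since $D(p)^2>0$ the sign of $\nu'(p)$ equals the sign of $N'D-\beta N^2$. Using $N'(p)=(\beta-1)\bigl(p^{\beta-2}e^\tau+(1-p)^{\beta-2}\bigr)$, expanding the product $N'D$, collecting terms, factoring $p^{\beta-2}(1-p)^{\beta-2}$ out of the mixed terms, and invoking the elementary identity $(1-p)^2+p^2=1-2p(1-p)$, the numerator collapses to the ``square plus remainder'' form
\[
-\bigl(e^\tau p^{\beta-1}-(1-p)^{\beta-1}\bigr)^2+(\beta-1)\,e^\tau p^{\beta-2}(1-p)^{\beta-2}.
\]

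Finally, for $\beta\le 1$ the first term is $\le 0$ since it is minus a square, while the second term is $\le 0$ because $\beta-1\le 0$ and $e^\tau p^{\beta-2}(1-p)^{\beta-2}>0$ on $(0,1)$; hence $\nu'(p)\le 0$ throughout $(0,1)$, which is precisely the claimed monotonicity of $\nu_n(p_n)$ in $p_n$. The only step that requires any care is the algebraic reduction of $N'D-\beta N^2$ to the displayed form --- in particular bookkeeping of the fractional exponents and noticing the factorization through $(1-p)^2+p^2=1-2p(1-p)$ --- but this is a routine, if slightly tedious, identity and presents no genuine conceptual obstacle.
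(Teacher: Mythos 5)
Your proposal is correct and follows essentially the same route as the paper: derive the closed form $\nu(p)=\bigl(p^{\beta-1}e^\tau-(1-p)^{\beta-1}\bigr)/\bigl(p^\beta e^\tau+(1-p)^\beta\bigr)$, differentiate, and rewrite the numerator of $\nu'(p)$ as $-\bigl(e^\tau p^{\beta-1}-(1-p)^{\beta-1}\bigr)^2+(\beta-1)e^\tau p^{\beta-2}(1-p)^{\beta-2}$, which is $\le 0$ for $\beta\le 1$. Your observation $D'(p)=\beta N(p)$ is a tidy shortcut, and your intermediate bookkeeping (giving $\beta-1+2p(1-p)$ in the mixed term) is the consistent version of the paper's displayed numerator, whose ``$\beta-2+2p(1-p)$'' appears to be a typo since the final factored form agrees with yours.
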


It is natural that $\nu_n$ is monotone decreasing in $p_n$, because if $p_n$ is too high and Skeptic
does not believe it, then Skeptic will bet on the non-occurrence  $x_n=0$.

For the special case of $\beta=1$, 
\[
\nu_n(p_n)=\frac{e^{\tau_n}-1}{1 + p_n (e^{\tau_n}-1)},
\]
which is bounded and monotone in $p_n \in [0,1]$.  For $\beta<1$,  $\nu_n(p_n)$ is unbounded and
it can be easily seen that
\[
\lim_{p_n \downarrow 0} \frac{\nu_n(p_n)}{1/p_n}=1, \qquad
\lim_{p_n \uparrow 1} \frac{\nu_n(p_n)}{1/(1-p_n)}=-1.
\]
We can interpret  the first limit as follows.  Suppose that $p_n=1/1000$, i.e.\ the price of a ticket is 1/1000 of a dollar.  
In this case Skeptic can buy $1000$ tickets with one dollar and has the chance of winning $1000$ dollars.
Hence Skeptic may want to buy $1000$ tickets.  Thus it is reasonable that  $\nu$ and $p_n$ are inversely proportional
when $p_n$ is small.

\section{Experiments}

\label{sec:experiments}

In this section we give some numerical studies of our strategy.  In Section 
\ref{subsec:simulation} we present some simulation results and in Section
\ref{subsec:JMA} we apply our strategy against 
probability forecasting  by the Japan Meteorological Agency.

\subsection{Some simulation studies}
\label{subsec:simulation}

We consider three cases and apply three strategies to these examples.  In our simulation studies
Reality chooses her moves probabilistically, either by Bernoulli trials or by a Markov chain model.
\begin{itemize}
\item Case 1:\;$x_n$ is a Bernoulli variable with the success probability 0.7 and
$p_n$ alternates between $0.4$ and $0.6$ (i.e.\ $0.4=p_1= p_3 = \cdots$ and  $0.6=p_2 = p_4 = \cdots$).
\item Case 2:\;$x_n$ is a Bernoulli variable with the success probability 0.5 and
$p_n$ alternates between $0.4$ and $0.6$.
\item Case 3:\;$p_n=0.5$ and $x_n$ is generated by a Markov chain model with transition probabilities shown in Figure \ref{fig:2}.
\end{itemize}

\begin{figure}[htbp]
\begin{center}
\includegraphics[width=4.5cm]{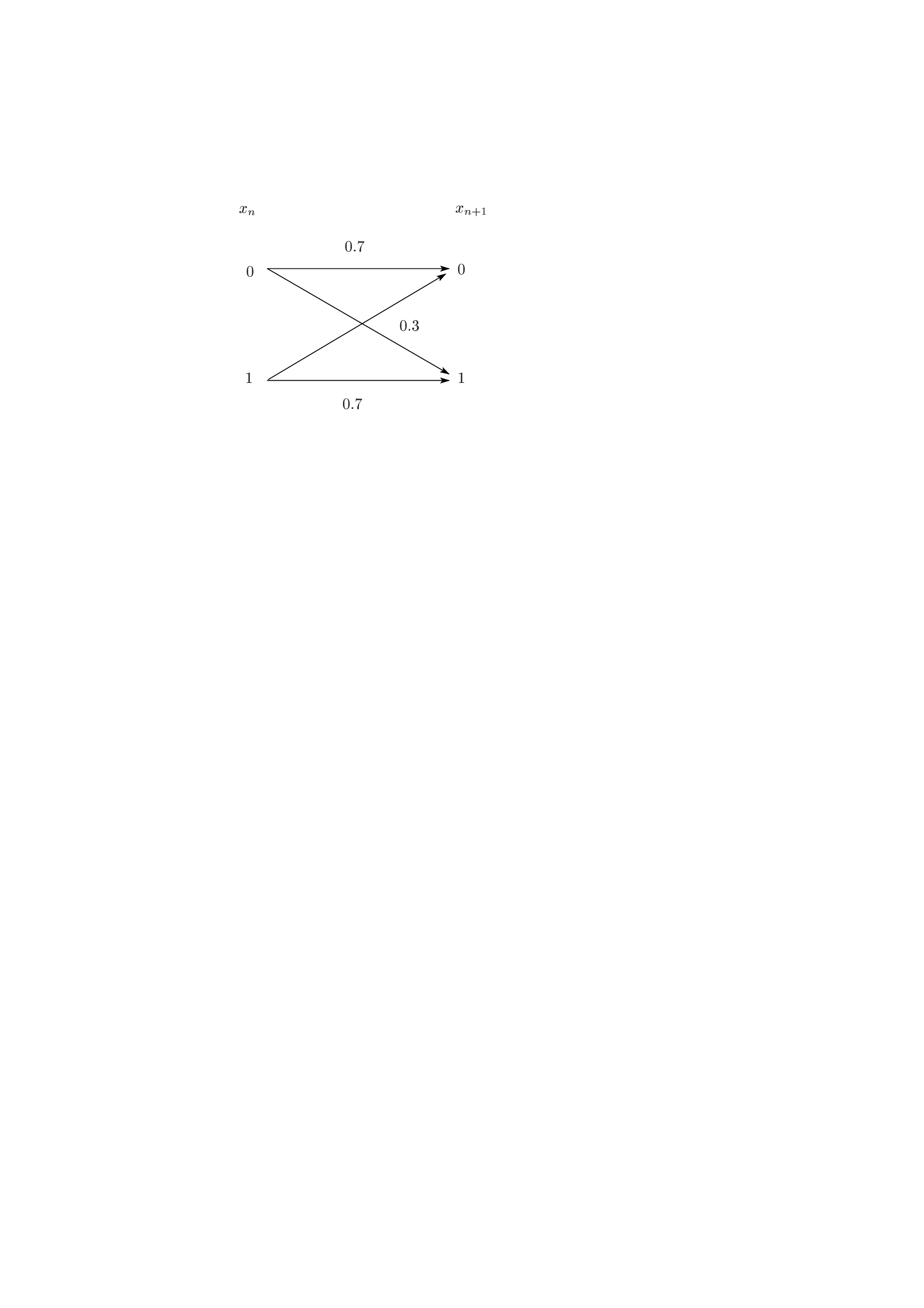}
\caption{Transition probabilities for  $x_n$}
\label{fig:2}
\end{center}
\end{figure}

\begin{itemize}
\item Strategy 1:\;$\theta$ is a scalar and $c_n=1$ in (\ref{eq:logistic-simple}). 
Assume that the prior density for $\theta$ is given as uniform distribution for [0,1].  The capital process is written as
\[
\cK_{n}^\pi = \int_0^1\frac{e^{\theta\sum_{i=1}^nx_i}}{\prod_{i=1}^n(1+p_i(e^\theta-1))}d\theta. 
\]

\item Strategy 2:\;$\thetat=[\theta_1,\beta-1]$ and $c_n\trns=[1,\log\frac{p_n}{1-p_n}]$. Assume independent priors for
$\theta_1$ and $\beta$, which are uniform distributions over [0,1].  The capital process is written as
\[
\cK_{n}^\pi = \int_0^1\int_0^1\frac{e^{\theta_1\sum_{i=1}^nx_i+(\beta-1)\sum_{i=1}^nx_i\log\frac{p_i}{1-p_i}}}
{\prod_{i=1}^n(1+p_i(e^{\theta_1+(\beta-1)\log\frac{p_i}{1-p_i}}-1))}d\theta_1d\beta  .
\]
\item Strategy 3:\;$\thetat=[\theta_1,\beta-1,\theta_3]$ and $c_n\trns=[1,\log\frac{p_n}{1-p_n},x_{n-1}]$. 
Assume independent priors for $\theta_1$, $\beta$ and $\theta_3$, which are uniform distributions over [0,1]. 
The capital process is written as
\[
\cK_{n}^\pi = \int_0^1\int_0^1\int_0^1\frac{e^{\theta_1\sum_{i=1}^nx_i+(\beta-1)\sum_{i=1}^nx_i\log\frac{p_i}{1-p_i}+\theta_3\sum_{i=1}^nx_ix_{i-1}}}{\prod_{i=1}^n(1+p_i(e^{\theta_1+(\beta-1)\log\frac{p_i}{1-p_i}+\theta_3x_{i-1}}-1))}d\theta_1d\beta d\theta_3 .
\]
\end{itemize}

\begin{figure}[htbp]
 \begin{minipage}{0.5\hsize}
   \includegraphics[width=85mm]{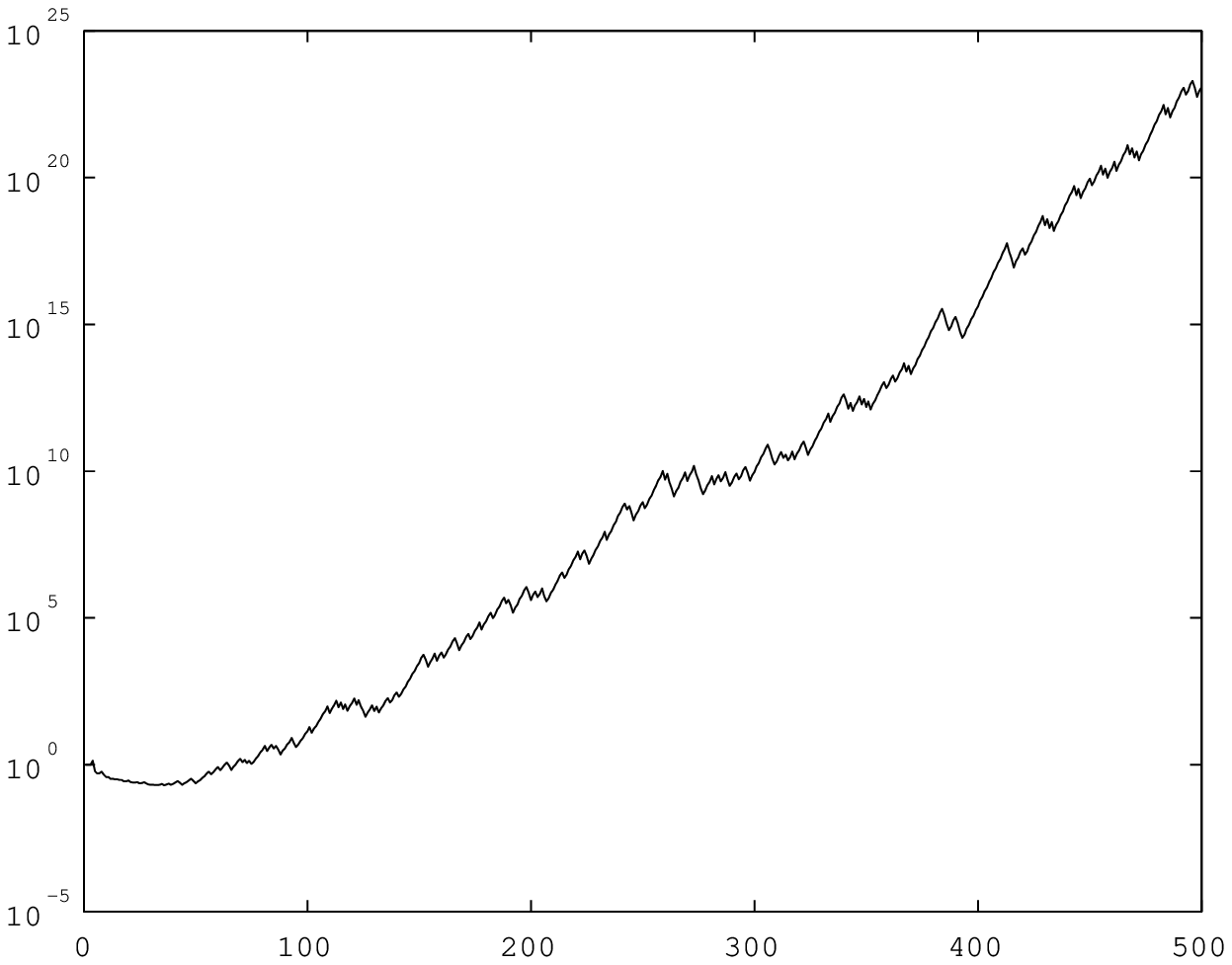}
  \caption{Case 1 with strategy 1}
  \label{st1_cs1}
 \end{minipage}
 \begin{minipage}{0.5\hsize}
   \includegraphics[width=85mm]{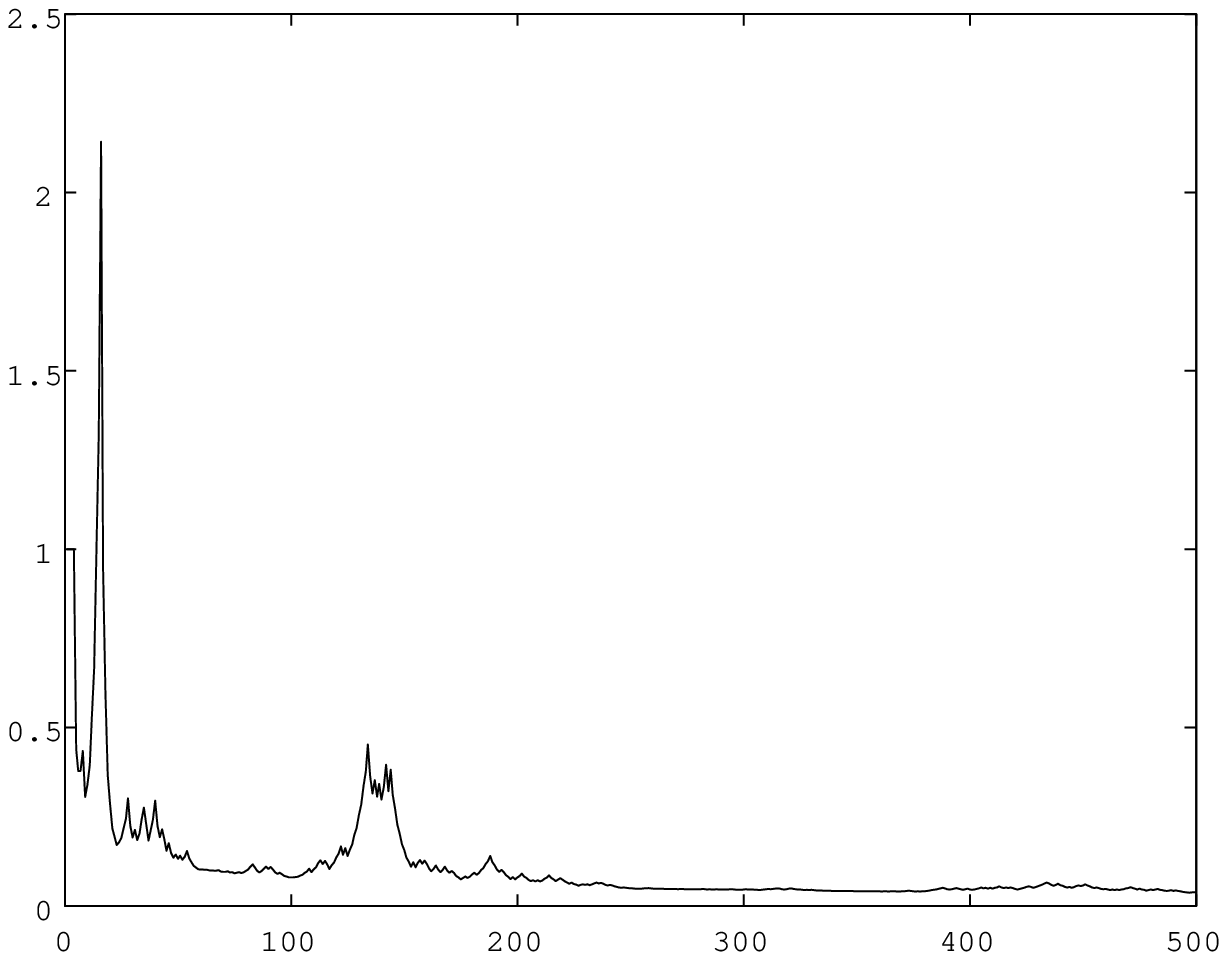}
  \caption{Case 2 with strategy 1}
  \label{st1_cs2}
 \end{minipage}
\end{figure}

\begin{figure}[htbp]
 \begin{minipage}{0.5\hsize}
   \includegraphics[width=85mm]{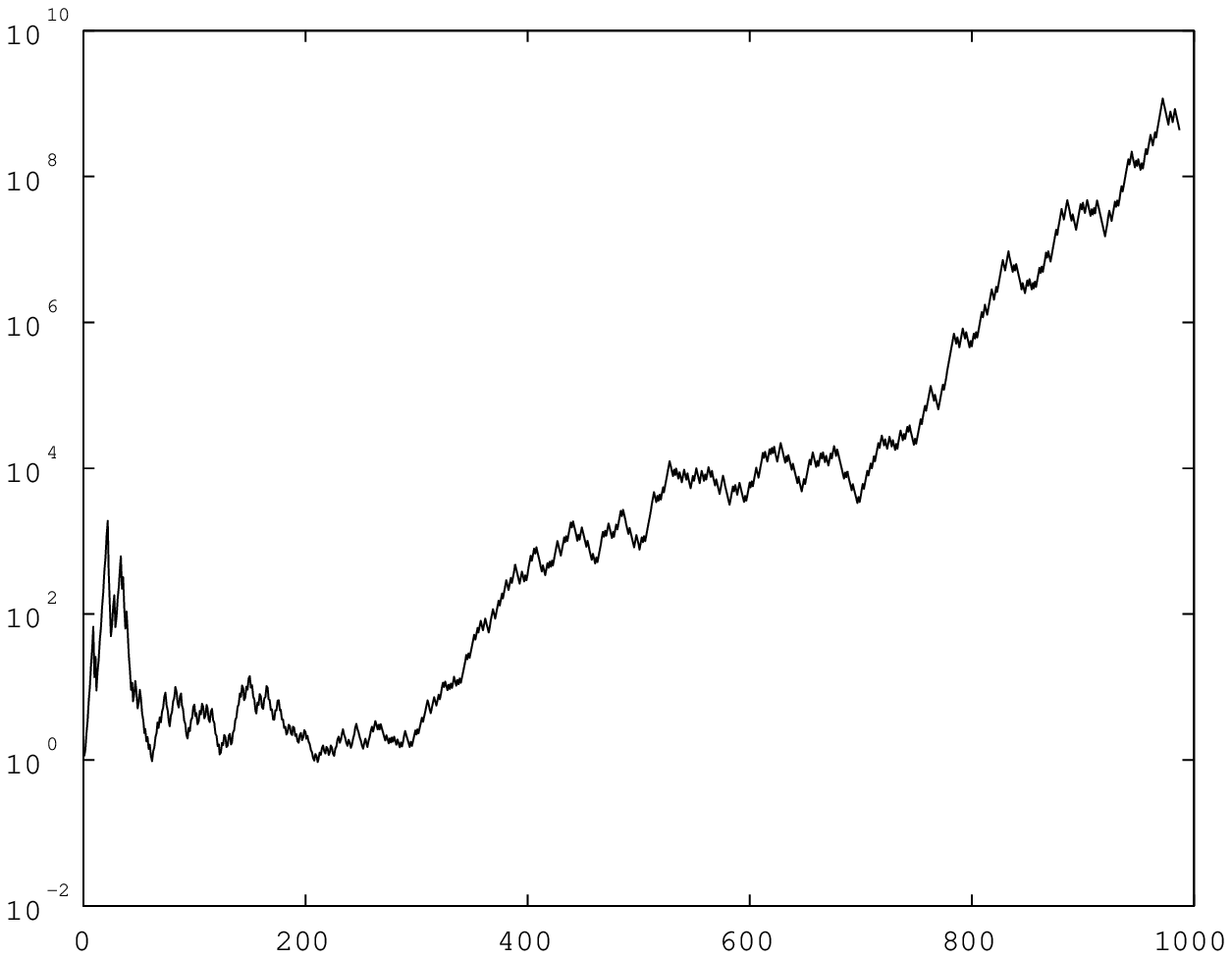}
  \caption{Case 2 with strategy 2}
  \label{st2_cs2}
 \end{minipage}
 \begin{minipage}{0.5\hsize}
   \includegraphics[width=85mm]{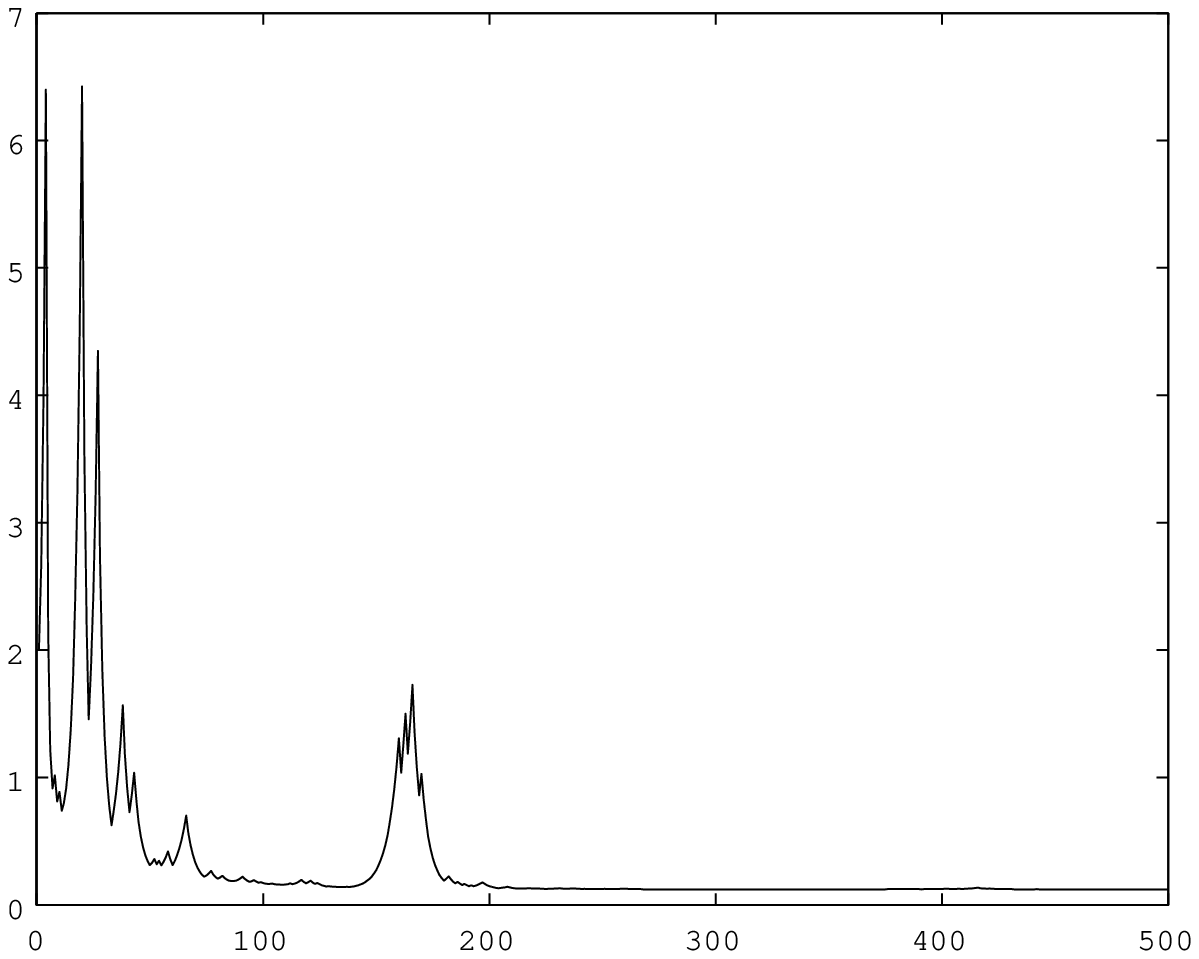}
  \caption{Case 3 with strategy 2}
  \label{st2_cs3}
 \end{minipage}
\end{figure}

\begin{figure}[htbp]
\begin{center}
\includegraphics[width=90mm]{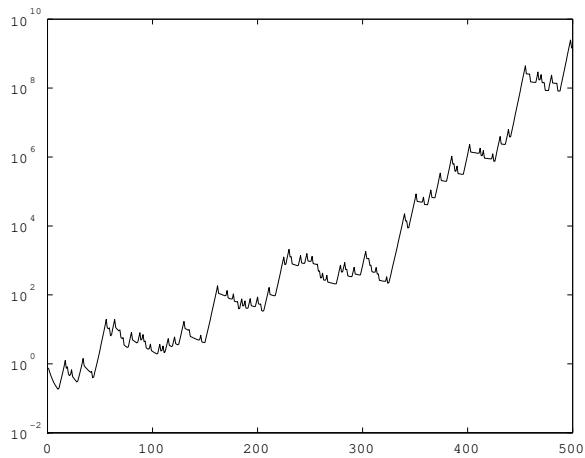}
\caption{Case 3 with strategy 3}
\label{st3_cs3}
\end{center}
\end{figure}

As shown in Figure \ref{st1_cs1} and Figure \ref{st1_cs2}, we can beat Reality by strategy 1 only in case 1. So we improve our strategy and apply strategy 2 to case 2. We can see from Figure \ref{st2_cs2} and Figure \ref{st2_cs3} that strategy 2 can work well in case 2 but still not effective in case 3. Finally, we use strategy 3 in case 3 and observe that it shows a good result for Skeptic in Figure \ref{st3_cs3}.

From these simulations, we see that Skeptic can beat Reality with more flexible strategy utilizing more side information.

\subsection{Betting against probability of precipitation by the Japan Meteorological Agency}
\label{subsec:JMA}

Now we apply our strategy to probability of precipitation 
provided by the Japan Meteorological Agency.
We collected the forecast probabilities for the Tokyo area from archives of the morning edition of 
the Mainichi Daily News and the actual weather data on 9:00 and 15:00 of each day for Tokyo area 
from {\tt http://www.weather-eye.com/} for the period of three years from 
January 1, 2009 to December 31, 2011.  We counted a day as rainy if the data on this site records
rain on 9:00 or on 15:00 of that day in Tokyo area.

The forecast probability $p_n$ is only announced as multiples of 10\% (i.e.\ $0\%, 10\%, \dots, 90\%, \allowbreak 100\%$) by JMA.  The data are summarized in Table \ref{data}.
$p_n$ represents the probability of precipitation on day $n$ and $x_n$ indicates the actual precipitation.
Actual ratio is calculated from the ratio of the number of rainy days to all days for a given value of $p_n$.

\begin{table}[htbp]
\caption{Actual ratio of rainy days}
\label{data}
\begin{center}
\begin{tabular}{|l|c|r|r|} \hline
$p_n$(\%)&$x_n=1$&$x_n=0$&Actual Ratio(\%) \\ \hline
0&1&61&1.6 \\
10&10&324&3.0 \\
20&24&193&11.1 \\
30&36&117&23.5 \\
40&20&26&43.5 \\
50&67&56&54.5 \\
60&38&14&73.1 \\
70&36&7&85.7 \\
80&36&4&90.0 \\
90&22&1&95.6 \\
100&3&0&100 \\ \hline
\end{tabular}
\end{center}
\end{table}
\begin{figure}[htbp]
\begin{center}
\includegraphics[width=12cm]{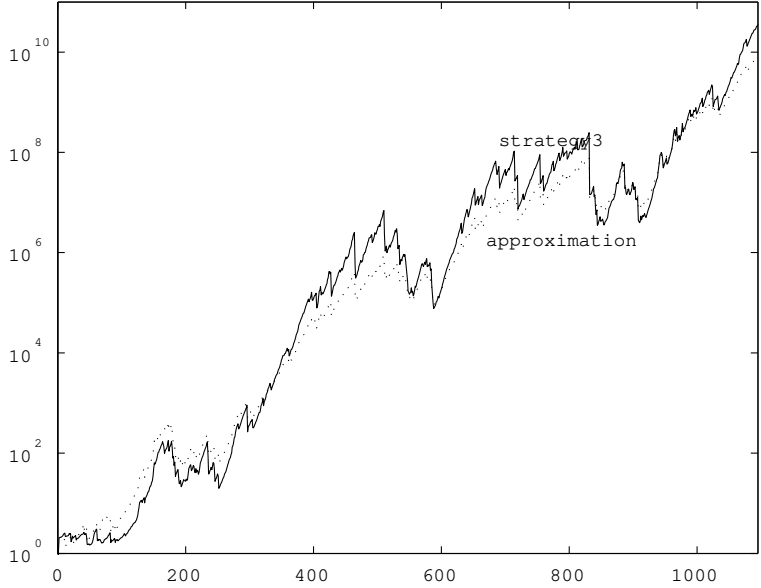}
\caption{Beating JMA by strategy 3 with $\beta$ uniform over $[0,2]$}
\label{data_st3}
\end{center}
\end{figure}

The distinct feature of the prediction by JMA is that  that 
$p_n$ tends to be closer to 50\% than the actual ratio.  For example,
when JMA announces $p_n=20\%$, the actual ratio is only $11.1\%$.
Similarly when JMA announces $p_n=80\%$, the actual ratio is $90\%$.
Hence JMA has the tendency of avoiding clear-cut forecasts.

In the hindsight strategy, the value of $\beta$, which is a coefficient for $\log(p_n/(1-p_n))$ 
in strategy 3 is close to 1.5.  Hence we modified strategy 3 of the previous section, so that
the prior for $\beta$ is uniform between 0 and 2.
We also substituted $p_n=1\%$ and $p_n=99\%$ for $p_n=0\%$ and  $p_n=100\%$, respectively,
because our strategy is not defined for $p_n=0\%$ or $100\%$.
Figure \ref{data_st3} shows %
the behavior of strategy 3 and the approximation $\cS_n\trns \cV_n^{-1} \cS_n/2$.
We see that our strategy works very well against JMA by exploiting its tendency 
of avoiding clear-cut forecasts.  It is also of interest that the capital process shows a
seasonal fluctuation and it does not perform well for the rainy season (June and July) in Tokyo area.

\section{Summary and discussion}
\label{sec:summary}

In this paper we proposed a Bayesian logistic betting strategy in the binary probability forecasting
game with side information (BPFSI).  We proved some theoretical results and showed good performance
of our strategy against probability forecasting by Japanese Meteorological Agency.

Here we discuss some topics for further investigation.  

We considered implications of a single Bayesian logistic betting strategy
in BPFSI. We can also take a look at the sequential optimizing strategy (SOS) of \cite{sos}
in BPFSI. Under the condition $\mle \rightarrow 0$, Bayesian strategy and 
SOS should behave in the same way.  However
we could not succeed to prove weak forcing of $\mle \rightarrow 0$ by SOS alone.

For the case of $d=1$ we could employ approaches of \cite{miyabe-takemura} to prove results similar to Theorem
\ref{thm:main}.  In \cite{miyabe-takemura} we also discussed Reality's strategies.  It is of 
interest to study strategies of Forecaster or Reality in the binary probability forecasting
game with side information.  Defensive forecasting 
(\cite{defensive-forecasting}, \cite{df-linear})
can be considered as a strategy of Forecaster.

We extended the binary probability forecasting game by including side information.
In our formulation side information $c_n$ is announced by Forecaster and 
in our logistic betting strategy $c_n$ is used as regressors in a logistic regression.
However Skeptic can use any transformation of $c_n$ in his strategy.  In this sense, it
might be more natural to formulate the game, where $c_n$ is announced by Skeptic.
Binary probability forecasting game is often considered from the viewpoint of 
prequential probability (\cite{dawid-vovk}) and leads to the notion of randomness
of the sequence $p_1 x_1 p_2 x_2 \dots$ (\cite{vovk-shen}, \cite{miyabe-superfarthingale}).
From the viewpoint of prequential probability it might also be natural to consider side information
$c_n$ as a part of moves by Skeptic for testing the randomness of  $p_1 x_1 p_2 x_2 \dots$.

We assumed multidimensional $c_n$.  However from the viewpoint of
game-theoretic probability, we do not lose much generality by restricting $c_n$ to be a scalar, since
if Skeptic can weakly force events $E_1,\dots, E_d$ then he can weakly force $E_1 \cap \dots \cap E_d$.
By the same reasoning we can also consider $d=\infty$, because if 
Skeptic can weakly force $E_1,E_2, \dots,$ then he can weakly force $\cap_{i=1}^\infty E_i$.
Interpretation and formulation of side information in game-theoretic probability needs
further investigation.

\bibliographystyle{abbrv}
\bibliography{logistic}

\end{document}